\documentclass[11pt]{article}
\usepackage{amssymb}
\usepackage{latexsym}
\usepackage{amsthm}
\usepackage{amscd}
\usepackage{amsmath}
\usepackage{indentfirst}
\setlength{\textheight}{8.4in}
\setlength{\textwidth}{5.7in}
\setlength{\parindent}{10pt}


\theoremstyle{definition}
\newtheorem{thm}{Theorem}[section]

\newtheorem{th-def}[thm]{Theorem-Definition}
\newtheorem{cor}[thm]{Corollary}
\newtheorem{defn-lem}[thm]{Definition-Lemma}

\newtheorem{ex}[thm]{Example}
\newtheorem{prop}[thm]{Proposition}

\newtheorem{rem}[thm]{Remark}

\numberwithin{equation}{section}

\allowdisplaybreaks[4]

\def \Q{{\mathbb Q}}

\def \C{{\mathbb C}}
\def \F{{\mathbb F}}

\def \Z{{\mathbb Z}}

\def\map#1.#2.{#1 \longrightarrow #2}
\def\rmap#1.#2.{#1 \dasharrow #2}

\DeclareMathOperator{\Spec}{Spec}
\DeclareMathOperator{\ord}{ord}

\DeclareMathOperator{\Gal}{Gal}

\DeclareMathOperator{\Hom}{Hom}

\DeclareMathOperator{\im}{Im}

\DeclareMathOperator{\Aut}{Aut}

\def\fb#1.{\underset #1 \to \times}
\def\pr#1.{\Bbb P^{#1}}
\def\ring#1.{\mathcal O_{#1}}
\def\mlist#1.#2.{{#1}_1,{#1}_2,\dots,{#1}_{#2}}

\def\Hom{\operatorname{Hom}}

\def\uloopr#1{\ar@'{@+{[0,0]+(-4,5)} @+{[0,0]+(0,10)}
@+{[0,0]+(4,5)}}
  ^{#1}}
\def\dloopr#1{\ar@'{@+{[0,0]+(-4,-5)} @+{[0,0]+(0,-10)}
@+{[0,0]+(4,-5)}}
  _{#1}}

\def\rloopd#1{\ar@'{@+{[0,0]+(5,4)} @+{[0,0]+(10,0)}
@+{[0,0]+(5,-4)}}
  ^{#1}}
\def\lloopd#1{\ar@'{@+{[0,0]+(-5,4)} @+{[0,0]+(-10,0)}
@+{[0,0]+(-5,-4)}}
  _{#1}}

\long\def\ignore#1{}
\long\def\ignore#1{#1}
\title{The representations of the automorphism groups and the Frobenius invariants of K3 surfaces}
\author{}
 \date{}
\setlength{\parindent}{0pt}
\begin{document}

\newpage
 \normalsize
\begin{center}
\huge
The representations of the automorphism groups and the Frobenius invariants of K3 surfaces
\end{center}
\vspace{0.43cm}
\begin{center}
\Large
Junmyong Jang
\footnote{This research was supported by Basic Science Research Program through the National Research Foundation of Korea(NRF) funded by the Ministry of Education, Science and Technology(2011-0011428) and KIAS grant funded by the Korea government.}
\end{center}

\vspace{0.2cm}
\begin{center}
\normalsize
email : jmjang@ulsan.ac.kr
\end{center}
\begin{center}
Mathematics Subject Classification : 14J20, 14J28, 11G25
\end{center}
\vspace{0.3cm}
\begin{center}
{\Large
Abstract}
\end{center}
\small
For a complex algebraic K3 surface, it is known that the representations of the automorphism group on the transcendental cycles is finite and is isomorphic
to the representation on the two-forms.
 In this paper we prove similar results for a K3 surface defined over a field of odd characteristic.
 Also we prove that the height and the Artin invariant of a K3 surface equipped with a non-symplectic automorphism of some high order are determined by a congruence class of the base characteristic.

\vspace{0.3cm}
Key word : Automorphism group of K3 surface, Crystalline cohomology, Frobenius invariant
\normalsize
\medskip
     \section{Introduction}
         When $X$ is an algebraic complex K3 surface, the second integral singular cohomology $H^{2}(X,\Z)$ is a free abelian group of rank 22 equipped with a
     lattice structure isomorphic to $U^{3} \oplus E_{8}^{2}$. Here $U$ is the hyperbolic plane and $E_{8}$ is the unique unimodular, even, and  negative definite lattice of rank 8. The cycle map gives a primitive embedding of the Neron-Severi group of $X$ into the second cohomology $NS(X) \hookrightarrow H^{2}(X, \Z)$. The rank of $NS(X)$ is called the Picard number of $X$ and is denoted by $\rho(X)$.
     The orthogonal complement of this embedding is called the transcendental lattice of $X$ and is denoted by $T(X)$. The rank of the transcendental lattice is $22- \rho(X)$. $H^{2}(X, \Z)$ is an overlattice of $NS(X) \oplus T(X)$ and
     $$|H^{2}(X, \Z)/(NS(X) \oplus T(X))| = |d(NS(X))|.$$
     The one dimensional complex space of global holomorphic two-forms of $X$,
     $H^{0}(X,\Omega _{X, \C}^{2})$ is a direct factor of $H^{2}(X, \Z) \otimes \C = H^{2}(X, \C)$ and by the Lefschetz $(1,1)$ theorem,
     $$NS(X) = H^{0}(X,\Omega _{X, \C}^{2})^{\bot} \cap H^{2}(X, \Z)$$
      in $H^{2}(X, \C)$.
      In particular, $H^{0}(X,\Omega _{X/\C}^{2})$ is a direct factor of $T(X) \otimes \C$. The automorphism group of $X$, $\Aut (X)$ has natural actions on  $T(X)$ and on $H^{0}(X,\Omega _{X/\C}^{2})$.
     Let us denote the actions of $\Aut(X)$ on the transcendental lattice and the two-forms by
     \begin{center}
     $\chi _{X} : \Aut(X) \to O(T(X))$ and $\rho _{X} : \Aut(X) \to Gl(H^{0}(X,\Omega _{X/\C}^{2}))$.
     \end{center}
     We say an automorphism of $X$, $\alpha :X \to X$ is symplectic if $\rho _{X}(\alpha)=1$.  If $\alpha$ is of finite order grater than 1
     and the order of $\alpha$ is equal to the order of $\rho_{X}(\alpha)$, we say $\alpha$ is purely non-symplectic.
         Since $H^{0}(X,\Omega _{X/\C}^{2})$ is a direct factor of $T(X) \otimes \C $, there is a canonical surjection
     $p _{X}: \im \chi _{X} \to \im \rho_{X}$.
     It is known that $p_{X}$ is an isomorphism and $\im \chi _{X}$ and $\im \rho _{X}$ are finite cyclic groups,
     \cite{Ni1}. The proof of this result is based on the Lefschetz (1,1) theorem and the Torelli theorem for K3 surfaces.
     If the order of $\im \rho _{X}$ is $N$,
     there is an automorphism $\alpha \in \Aut X$ such that $\xi _{N} =\rho _{X} (\alpha)$ is a primitive $N$-th root of unity.
     Then $T(X)$ has a free $\Z[\xi _{N}]$-module structure in a natural way,
     \cite{MO}, and $22- \rho (X)$ is a multiple of $\phi (N)$. Here $\phi$ is the Euler $\phi$-function. \\

     Assume $k$ is an algebraically closed field of odd characteristic $p$ and $W$ is the ring of Witt vectors of $k$.
     Let $X$ be a K3 surface defined over $k$.
     The second crystalline cohomology $H^{2}_{cris}(X/W)$ and the second \'{e}tale cohomology $H^{2}_{\acute{e}t} (X , \Z _{l})$ are unimodular lattices of rank 22 over $W$ and $\Z _{l}$ respectively. Here $l$ is a prime number which is different from $p$.
     The cycle maps to the crystalline cohomology and \'{e}tale cohomology give
     an embedding of $W$-modules
     $$NS(X) \otimes W \hookrightarrow H^{2}_{cris}(X/W)$$
     and an embedding of $\Z _{l}$-modules
     $$NS(X) \otimes \Z_{l} \hookrightarrow H^{2}_{\acute{e}t}(X,\Z _{l}).$$
     The Newton polygon of $H^{2}_{cris}(X/W)$ is determined by the height of the formal Brauer group of $X$. (See section 2) This height is a positive integer between 1 and 10 or $\infty$. If the height of $X$ is $\infty$, we say $X$ is supersingular.
     We again denote the representation of the automorphism group of $X$ on the global two-forms by
     $$\rho _{X}: \Aut (X) \to Gl (H^{0}(X,\Omega _{X/k}^{2})).$$


     When $X$ is a supersingular K3 surface over $k$, $\rho(X)$ is 22, \cite{M}, \cite{C}, \cite{MA},
     and the discriminant group of the Neron-Severi group
     is $(\Z/p)^{2\sigma}$ for a positive integer $\sigma$ between 1 and 10. We say $\sigma$ is the Artin-invariant of $X$.
      By the Frobenius invariant of a K3 surface in positive characteristic, we mean the height and the Artin-invariant.
     If the Artin-invariant of $X$ is $\sigma$, $H^{2}_{cris}(X/W)/(NS(X)\otimes W)$ is a $\sigma$-dimensional $k$-space and
     there is a canonical projection
     $$H^{2}_{cris}(X/W)/(NS(X) \otimes W) \to H^{2}(X,\mathcal{O}_{X}).$$
          Moreover $H^{2}_{cris}(X/W)/ (NS(X) \otimes W)$
     is an invariant isotropic subspace of the discriminant group $(NS(X)^{*}/NS(X)) \otimes k$. Let
     $$\nu_{X}: \Aut (X) \to O(NS(X)^{*}/NS(X))$$
     be the representations on the discriminant group of the Neron-Severi group.
     We prove that there is a canonical isomorphism $\im \nu _{X} \to \im \rho _{X}$ and $\im \nu _{X} \simeq \im \rho_{X}$
     is a finite cyclic group. (Proposition \ref{pro})\\


     When $X$ is a K3 surface of finite height $h$ over $k$, $\rho(X)$ is at most $22-2h$, \cite{AM}.
     For a K3 surface of finite height $X$,
     we call the orthogonal complements of the cycle maps
     \begin{center}
          $NS(X) \otimes \Z_{l}
     \hookrightarrow H^{2}_{\acute{e}t}(X,\Z_{l})$ and $NS(X) \otimes W \hookrightarrow H^{2}_{cris}(X/W)$
     \end{center}
     the $l$-adic transcendental lattice of $X$ and the crystalline transcendental lattice of $X$ respectively.
     We denote those lattices by $T_{l}(X)$ and $T_{cris}(X)$.
               The representation of $\Aut (X)$ on $T_{l}(X)$ and $T_{cris}(X)$ are denoted by
     \begin{center}
     $\chi _{l,X} : \Aut (X) \to O(T_{l}(X))$ and $\chi _{cris,X} : \Aut (X) \to O(T_{cris}(X))$.
     \end{center}
     We will see $\ker \chi _{l,X}$ is equal to $\ker \chi _{X,cris}$ and
      for any automorphism $\alpha \in \Aut(X)$, the characteristic polynomial of $\chi _{l,X}(\alpha)$ is equal to
      the characteristic polynomial of $\chi _{cris, X}(\alpha)$. (Proposition \ref{pr'})
     And we will construct canonical projections
     \begin{center}
      $p_{cris,X} : \im \chi _{cris,X} \to \im \rho _{X}$ and $p _{l,X} : \im \chi _{l,X} \to \im \rho _{X}$
      \end{center}
     satisfying $\rho _{X} = p_{cris,X} \circ \chi _{cris, X} = p_{l,X} \circ \chi _{l,X}$.
     Also, using a Neron-Severi group preserving lifting of $X$, we prove that $\im \chi_{cris,X}$, $\im \chi _{l,X}$ and $\im \rho _{X}$ are finite.
     (Proposition \ref{pr}, Proposition \ref{pr'}) It follows that, for any $\alpha \in \Aut (X)$, all the eigenvalues of $\chi _{l,X}(\alpha)$ are roots of unity.
          In addition to that, if the order of $\im \chi _{l, X}(\alpha)$ is not divisible by $p$
             and the order of $\rho _{X}(\alpha)$ is $n$, every primitive $n$-th root of unity
     occurs as an eigenvalue of $\chi_{l,X} (\alpha)$. (Proposition 3.7)
     This generalizes Proposition 2.1 in \cite{Ke1}.\\

     When $\alpha$ is an automorphism of a K3 surface $X$ over $k$, under a certain conditions, some parts of eigenvalues of $\alpha ^{*}| H^{2}_{\acute{e}t}(X,\Q_{l})$ are decided by the Frobenius invariant of $X$ and $\rho _{X}(\alpha)$. More precisely we have the following result.\\

       \vspace{0.2cm}
     {THEOREM 3.9.}
      Let $k$ be an algebraically closed field of odd characteristic $p$.
     Assume $X$ is a K3 surface over $k$ and  $\alpha$ is an automorphism of $X$.
     We assume either of the following :\\

     \begin{tabular}{ll}

      &(1) $X$ is of finite height $h$ and the order of $\chi_{l,X}(\alpha)$ is prime to $p$\\

     or & \\

     & (2) $X$ is supersingular of Artin-invariant $\sigma$ and the order of $\alpha$ is finite and prime to $p$.
     \end{tabular}\\

     Suppose $\rho _{X}(\alpha)(u) = \zeta \cdot u$ for a generator $u$ of $H^{0}(X,\Omega _{X/k} ^{2})$
     and $\xi$ is the Teichm\"{u}ller lift of $\zeta$ in $W$.

     Then
     in the case (1),
     $\xi^{\pm p^{0}}, \xi ^{\pm p^{-1}}, \cdots, \xi ^{\pm p^{1-h}}$ appear as eigenvalues of $\chi _{l,X}(\alpha)$
     and in the case (2),
     $\xi^{\pm p^{0}}, \xi ^{\pm p^{-1}} , \cdots , \xi ^{\pm p^{1-\sigma}}$ appear as eigenvalues of $\alpha^{*}|H^{2}_{\acute{e}t}(X, \Q _{l})$.\\

     \vspace{0.2cm}
      Based on Theorem \ref{thm1} and the Tate conjecture for K3 surfaces of finite height, \cite{NO}, \cite{MA}, we can prove the followings.\\

     \vspace{0.2cm}
     {THEOREM 3.10.}
     Let $k$ be an algebraic closure of a finite field of odd characteristic $p$ and $X$ be a K3 surface of finite height $h$ over $k$.
     If the order of $\im \chi_{l,X}$ is not divisible by $p$,
     the projection $p _{l,X} : \im \chi _{X,l} \to \im \rho _{X}$ is bijective.\\

     \vspace{0.2cm}
     {COROLLARY 3.11.}
     Let $k$ be an algebraic closure of a finite field of odd characteristic $p$ and $X$ be a K3 surface of finite height over $k$. If
     $N$ is the order of $\im \rho _{X}$, then the rank of $T_{l}(X)=22-\rho(X)$ is divisible by $\phi (N)$.\\

     \vspace{0.2cm}

      We can apply the above results to study the relation of Frobenius invariant and non-symplectic automorphisms for K3 surfaces.
     We prove the followings.\\

     \vspace{0.2cm}
     {COROLLARY 4.3.}
   Let $k$ be an algebraically closed field of odd characteristic $p$
     and $X$ be a K3 surface over $k$. Let $\alpha$ be an automorphism of $X$.
     We assume that the order of $\rho _{X}(\alpha)$ is $N(>2)$ and that the rank of the Neron-Severi group of $X$
     is at least $22-\phi (N)$.
     If $p^{m} \equiv -1$ modulo $N$ for some $m$, $X$ is supersingular.
     If $p^{m} \not\equiv -1$ modulo $N$ for any $m$ and the order of $p$ in $(\Z /N\Z)^{*}$ is $n$, the height of $X$ is $n$.\\

     \vspace{0.2cm}
     {COROLLARY 4.4.}
      Assume $X$ is a K3 surface over $k$ and $\alpha$ is an automorphism of $X$ such that the order of $\rho _{X}(\alpha)$ is $N(>2)$.
     We assume that $\alpha$ is of finite order prime to $p$ and that a primitive $N$-th root of unity appears only one time in the eigenvalues of $\alpha ^{*}|H^{2}_{\acute{e}t}(X, \Q_{l})$.
     If the order of $p$ in $(\Z /N\Z)^{*}$ is $2n$ and $p^{n} \equiv -1 $ modulo $N$, $X$ is supersingular of Artin-invariant $n$.\\

     \vspace{0.2cm}

     If $X$ is a complex algebraic K3 surface with $N= |\im \rho _{X}|$ and the rank of $T(X)$ is equal to $\phi (N)$, $X$ has a model defined over a number field, \cite{Ri}. By the above results, we deduce that for almost all places, the Frobenius invariant of the reduction of the model of $X$ over the number field is determined by the congruence class of the residue characteristic modulo $N$. (Theorem 4.7.) This generalizes the results on the Delsarte K3 surfaces, \cite{Shi}, \cite{Yu}, \cite{Go}.

         \vspace{0.6cm}
    {\bf Acknowledgment}\\
    The author thanks to Keum, J. and Lee, D. for helpful comments.

     \section{Crystalline cohomology of K3 surfaces}
     In this section, we review some facts on the Neron-Severi group and the crystalline cohomology of K3 surfaces over
     a field of odd characteristic.
     Assume $k$ is an algebraically closed field of characteristic $p>2$.
     Let $W$ be the ring of Witt vectors of $k$ and $K$ be the fraction field of $W$.
     Assume $X$ is a K3 surface over $k$. Let $\widehat{Br}_{X}$ be the formal Brauer group of $X$. $\widehat{Br}_{X}$ is a smooth formal group
     of dimension 1 over $k$, \cite{A}.  A smooth formal group of dimension 1 over an algebraically closed filed of positive characteristic is classified by its height. The height of $\widehat{Br}_{X}$, $h$ is a positive integer $(1 \leq h \leq 10)$ or $\infty$. When $h =\infty$, we say $X$ is supersingular. The Dieudonn\'{e} module of $\widehat{Br}_{X}$ is
     $$\mathbb{D}(\widehat{Br}_{X}) = W[F,V]/(FV=p, F=V^{h-1})$$
     if $h$ is finite or
     $$\mathbb{D}(\widehat{Br}_{X}) = k[[V]]$$
     if $h= \infty$. Here $F$ is a Frobenius linear operator and $V$ is a Frobenius inverse linear operator.\\

     The crystalline cohomologies $H^{i}_{cris}(X/W)$ are finite free $W$-modules of rank $1,0,22,0,1$ for
     $i=0,1,2,3,4$ respectively equipped with Frobenius-linear operators
     $$ \mathbf{F} : H^{i} _{cris}(X/W) \to H^{i} _{cris}(X/W).$$

     If the height $h$ is finite, the Frobenius slopes of $H^{2}_{cris}(X/W)$ are $1-1/h, 1, 1+1/h$ of length $h,22-2h,h$ respectively.
     If $X$ is supersingular, the only Frobenius slope of $H^{2}_{cris}(X/W)$ is 1 of length 22.\\

      The crystalline cohomology $H^{i}_{cris}(X/W)$ can be realized as the hyper-cohomology of the DeRham-Witt complex, \cite{I1},
     $$ 0 \to W\mathcal{O}_{X} \to W\Omega ^{1}_{X/k} \to W\Omega ^{2}_{X/k} \to 0.$$
     The naive filtration of the DeRham-Witt complex gives the slope spectral sequence
     $$H^{i}(X, W\Omega ^{j}_{X/k}) \Rightarrow H^{i+j}_{cris}(X/W).$$
     The $E_{1}$-level page of the slope spectral sequence of $X$ is
     $$\begin{array}{ccccc}
     H^{2}(X,W\mathcal{O}_{X}) & \overset{d}{\to}  & H^{2}(X,W\Omega _{X} ^{1}) & \to & W\\
          0 &  \to & H^{1}(X,W\Omega_{X} ^{1}) & \to & 0 \\
     W & \to & 0 & \to & H^{0}(X, W\Omega _{X} ^{2}).
     \end{array}$$
     Here $H^{2}(X,W\mathcal{O}_{X})$ is isomorphic to $\mathbb{D}(\widehat{Br}_{X})$, \cite{AM}.
     By an exact sequence of sheaves on $X$,
     $$0 \to W\mathcal{O}_{X} \overset{V}{\to} W\mathcal{O}_{X} \to \mathcal{O}_{X} \to 0,$$
     we have an isomorphism $H^{2}(X, W\mathcal{O}_{X})/ V \simeq H^{2}(X, \mathcal{O}_{X})$. \\

      If $X$ is of finite height $h$, $H^{2}(X,W\Omega _{X}^{1})=0$ and the slope spectral sequence degenerates at $E_{1}$-level.
       And $H^{2}_{cris}(X/W)$ has an F-crystal decomposition, \cite{I1}, II.7.2, \cite{Ka}, Theorem 1.6.1,
     $$H^{2}(X/W) = H^{2}_{cris}(X/W)_{[1-1/h]} \oplus H^{2}_{cris}(X/W)_{[1]} \oplus H^{2}_{cris}(X/W)_{[1+1/h]}.$$
     Here
     $$H^{2}_{cris}(X/W)_{[1-1/h]} = H^{2}(X, W\mathcal{O}_{X}) = \mathbb{D}(\widehat{Br}_{X})$$
      and
       $$H^{2}_{cris}(X/W)_{[1+1/h]} =
     \Hom(H^{2}(X,W\mathcal{O}_{X}), H^{4}(X/W)).$$
       Note that $H^{4}(X/W)$ is a free $W$-module of rank 1 equipped with a Frobenius linear operator of slope 2.
       For the cup product pairing on $H^{2}_{cris}(X/W)$, $H^{2}_{cris}(X/W)_{[1-1/h]}$ and $H^{2}_{cris}(X/W) _{[1+1/h]}$ are isotropic and dual to each other.
      On the other hand, $H^{2}_{cris}(X/W)_{[1]}$ is unimodular. The discriminant of the $\Z _{p}$-lattice $H^{2}_{cris}(X/W) _{[1]} ^{F=p}$ is $(-1)^{h+1}$.
     When $X$ is of finite height $h$, the Frobenius morphism and the lattice structure of $H^{2}_{cris}(X/W)$ are completely determined by $h$, \cite{Og2}, p.363.
      Because there exists a canonical embedding, \cite{I1}, Proposition II.5.12,
     $$NS(X) \otimes \Z _{p} \hookrightarrow H^{1}(X,W\Omega^{1} _{X})^{F=p},$$
      the Picard number of $X$, $\rho(X)$ is not greater than the length of slope 1 part of $H^{2}_{cris}(X/W)$. It follows that
     $\rho(X) \leq 22-2h$ if $h$ is finite.\\

       In odd characteristic,
       it is known that $X$ is supersingular if and only if the Picard number of $X$ is 22, \cite{M}, \cite{C}, \cite{MA}. Assume $X$ is a supersingular K3 surface.
      The discriminant of $NS(X)$ is $-p^{2\sigma}$ for an integer $\sigma$ between 1 and 10. $\sigma$ is called the Artin-invariant of $X$.
      The discriminant group of $NS(X)$ is isomorphic to $(\Z / p)^{2\sigma}$.
      Moreover $NS(X)$ is determined by the base characteristic $p$ and $\sigma$, \cite{RS2}.\\

      For a supersingular K3 surface $X$, $H^{0}(X,W\Omega _{X}^{2})=0$ and the slope spectral sequence degenerates at $E_{2}$-level, \cite{I1}, Corollaire II.3.13.
     The only non-trivial map in the $E_{1}$-page of the slope spectral sequence is
     $$d : H^{2}(X,W\mathcal{O}_{X}) \to H^{2}(X,W\Omega _{X}^{1}).$$
     Here $d$ is surjective and
     $$\ker d = H^{2}_{cris}(X/W) / F^{1} H^{2}_{cris}(X/W)$$
     where $F^{\cdot}H^{2}_{cris}(X/W)$ is the filtration given by the slope spectral sequence.
     We can identify $F^{1}H^{2}_{cris}(X/W)$ with the image of the cycle map, \cite{I1}, II.7.2,
     $$NS(X) \otimes W \hookrightarrow H^{2}_{cris}(X/W).$$
     Since $H^{2}_{cris}(X/W)$ is a unimodular $W$-lattice and the cycle map preserves the paring,
     we have a chain
      $$ NS(X) \otimes W \subset H^{2}_{cris}(X/W) \subset (NS(X) ^{*}) \otimes W.$$
      And $\ker d = H^{2}_{cris}(X/W)/(NS(X) \otimes W)$ is an $\sigma$-dimensional isotropic $k$-subspace of the discriminant group $(NS(X) ^{*} \otimes W)/ (NS(X) \otimes W) = (NS(X) ^{*}/NS(X)) \otimes k$.
           It is also known that
     $$\ker dV^{i} : H^{2}(X,W\mathcal{O}_{X}) \to H^{2}(X, W\Omega _{X}^{1})$$
     is a $\sigma -i$ dimensional $k$-space for $i \leq \sigma$
     and $\ker dV^{i+1} \subseteq \ker dV^{i}$, \cite{Ny}, Theorem 0.1. When $x$ is a non-zero element of $\ker dV^{\sigma -1}$,
     $$x, Vx , \cdots , V^{\sigma -i-1}x$$ generate
     $\ker V^{i}d$ over $k$ and
     $x$ is a $V$-adic topological generator of $H^{2}(X, W\mathcal{O}_{X})$.
     The composition
     $$\ker dV^{\sigma - 1} \hookrightarrow H^{2}(X,W\mathcal{O}_{X}) \twoheadrightarrow H^{2}(X,\mathcal{O}_{X})$$
      is an isomorphism.

     \section{Representations of the automorphism groups on the two-forms and transcendental cycles}
     Assume $k$ is an algebraically closed field of characteristic $p>2$. Let $W$ be the ring of Witt vectors of $k$ and $K$ be the fraction field of $W$. Assume $X$ is a K3 surface over $k$.
     Let
     \begin{center}
     $\rho  _{X}: \Aut (X) \to GL(H^{0}(X, \Omega _{X/k}^{2}))$ and $\lambda _{X} : \Aut (X) \to GL(H^{2}(X, \mathcal{O}_{X})) $
     \end{center}
     be the representation of $\Aut(X)$ on
     $H^{0}(X, \Omega _{X/k}^{2})$ and $H^{2}(X, \mathcal{O}_{X})$. By the Serre duality, for any $\alpha \in \Aut X$, $\rho _{X}(\alpha) ^{-1} = \lambda _{X} (\alpha)$ and $\ker \rho _{X} = \ker \lambda _{X}$. \\

     Assume $X$ is supersingular.
     Let
       $$\nu  _{X}: \Aut(X) \to O((NS(X)^{*}/NS(X)) \otimes k)$$
     be the representation of $\Aut(X)$ on the discriminant group $(NS(X)^{*}/NS(X)) \otimes k$.
     Because  $\nu _{X}$ factors through the action of $\Aut(X)$ on $(NS(X)^{*}/NS(X))$, a finite dimensional space over a finite field $\Z /p$,
     $\im \nu _{X}$ is finite. Since $\ker d : H^{2}(X,W\mathcal{O}_{X}) \to H^{2}(X, W\Omega ^{1}_{X})$ is an
     invariant subspace of $(NS(X)^{*}/NS(X)) \otimes k$ for the action of $\Aut X$ and there is a  projection $\ker d \to H^{2}(X,\mathcal{O}_{X})$,
     we have a surjective map $q_{X} : \im \nu _{X} \to \im \lambda _{X}$ such that $q_{X} \circ \nu _{X} = \lambda _{X}$.
     Then a surjective map $p_{X} =q_{X} ^{-1} : \im \nu _{X} \to \im \rho _{X}$ satisfies $p_{X} \circ \nu _{X} = \rho _{X}$.
      \begin{prop}\label{pro}
     Let $X$ be a supersingular K3 surface in odd characteristic. Then $p _{X} : \im \nu _{X} \to \im \rho _{X}$ is an isomorphism.
     \end{prop}
     \begin{proof}
     Let $\sigma$ be the Artin-invariant of $X$ and $x$ be a non-zero element of $\ker dV^{\sigma -1}$.
      Then, $x_{i} = V^{i}x, i=1, \cdots , \sigma -1$ is a basis of $\ker d$.
       Let $y_{i}$ be the dual basis for $x_{i}$ of the dual isotropic subspace of $\ker d$ in $(NS(X)^{*}/NS(X)) \otimes k$.
       Then any automorphism $\alpha \in \Aut (X)$ preserves all the lines
     $k \cdot x_{i}$ and $k \cdot y_{i}$. In other words, all $x_{i}$ and $y_{i}$ are eigenvectors of $\nu _{X}(\alpha )$.
     Since $\alpha ^{*}(V^{i}x) = V^{i}\alpha^{*}(x)$ and $y_{i}$ is dual to $x_{i}$, $\nu _{X} (\alpha)$ is decided by the eigenvalue at $x= x_{0}$. But, for any $\alpha  \in \Aut(X)$, $\rho (\alpha) $ is the inverse of the eigenvalue of $\nu _{X}(\alpha)$ for an eigenvector
     $x_{0}$, so $p_{X}$ is injective.
     \end{proof}

     \begin{rem}
     In \cite{KS}, a supersingular K3 surface is defined to be generic if the order of $\im \nu _{X}$ is 1 or 2. And it is proved that
     there exists a generic supersingular K3 surface of Artin-invariant $\sigma \geq 2$ in odd characteristic, \cite{KS}, Theorem 1.7. By the above proposition,
     a supersingular K3 surface in odd characteristic is generic if and only if the order of $\im \rho _{X}$ is 1 or 2.
      \end{rem}
     For the order of the $\im \rho$, the following is known.
     \begin{prop}(\cite{Ny}, Theorem 2.1)\label{Ny}
     The cardinality of $\im \rho _{X}$ divides $p ^{\sigma} +1$.
     \end{prop}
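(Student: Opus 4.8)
The plan is to pin down the action of one well-chosen automorphism on the discriminant and to read off the divisibility from the crystalline Frobenius. By Proposition~\ref{pro} the group $\im\rho$ is finite cyclic; write $N$ for its order, and note that $p\nmid N$ because $\im\rho\subseteq k^{*}$ and a field of characteristic $p$ has no nontrivial $p$-th root of unity. Choose $\alpha\in\Aut(X)$ with $\rho(\alpha)$ of exact order $N$, and let $\zeta\in k^{*}$ be the scalar by which $\alpha^{*}$ acts on the distinguished line $\ell:=\ker(dV^{\sigma-1})=k\cdot x_{0}$ inside $D_{k}:=(NS(X)^{*}/NS(X))\otimes k$; by the Serre-duality identification used above, $\zeta$ equals $\rho(\alpha)$ up to inversion, in particular a primitive $N$-th root of unity. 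I will use three features of $D_{k}$: the cup-product form, which $\alpha^{*}$ preserves because $\alpha^{*}$ preserves the intersection form of $NS(X)$ and hence the $\F_{p}$-valued discriminant form on $NS(X)^{*}/NS(X)$; the Frobenius-semilinear ($p$-power) operator $\Phi$ on $D_{k}$ induced by the crystalline Frobenius, with which $\alpha^{*}$ commutes because $\alpha^{*}$ is an automorphism of the $F$-crystal $H^{2}_{cris}(X/W)$; and the fact, recorded above, that $\im\chi$ is realised inside $Gl(NS(X)^{*}/NS(X))$ over the prime field $\F_{p}$.

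The structural input I would quote is Ogus's description of supersingular K3 crystals: $\ker d\subseteq D_{k}$ is a \emph{strictly characteristic} totally isotropic subspace for $\Phi$, i.e.\ it is maximal isotropic of dimension $\sigma$, satisfies $\dim(\ker d+\Phi(\ker d))=\sigma+1$ and $\sum_{i\ge0}\Phi^{i}(\ker d)=D_{k}$, and the one-dimensional intersection $\ker d\cap\Phi(\ker d)\cap\cdots\cap\Phi^{\sigma-1}(\ker d)$ is precisely the line $\ell=k\cdot x_{0}$. Granting this, a short linear-algebra computation from the axioms gives $\ker d=\bigoplus_{j=0}^{\sigma-1}\Phi^{-j}(\ell)$, a direct-sum decomposition $D_{k}=\ker d\oplus\Phi^{\sigma}(\ker d)$ (so the $2\sigma$ lines $\Phi^{m}(\ell)$, $-(\sigma-1)\le m\le\sigma$, form a basis of $D_{k}$), and therefore $\langle\ell,\Phi^{m}(\ell)\rangle=0$ for $|m|\le\sigma-1$ but $\langle\ell,\Phi^{\sigma}(\ell)\rangle\ne0$ (the hyperplane $\ell^{\perp}$ already contains the $2\sigma-1$ basis lines $\Phi^{m}(\ell)$ with $|m|\le\sigma-1$, so it must miss $\Phi^{\sigma}(\ell)$).

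Now I combine the three facts. Pick $0\ne v\in\ell$ and $0\ne w\in\Phi^{\sigma}(\ell)$ with $\langle v,w\rangle\ne0$. Since $\alpha^{*}$ commutes with $\Phi$ and $\Phi$ is $p$-power semilinear, $\alpha^{*}v=\zeta v$ forces $\alpha^{*}w=\zeta^{p^{\sigma}}w$; as $\alpha^{*}$ preserves the pairing, $\langle v,w\rangle=\langle\alpha^{*}v,\alpha^{*}w\rangle=\zeta^{1+p^{\sigma}}\langle v,w\rangle$, whence $\zeta^{1+p^{\sigma}}=1$. Since $\zeta$ has order $N$ this gives $N\mid p^{\sigma}+1$, i.e.\ $|\im\rho|\mid p^{\sigma}+1$. (Equivalently: every $\F_{p}$-rational isometry of $NS(X)^{*}/NS(X)$ preserving the characteristic subspace $\ker d$ lies in the norm-one cyclic subgroup $\mu_{p^{\sigma}+1}\subset\F_{p^{2\sigma}}^{*}$ occurring in the Rudakov--Shafarevich / Ogus model of the discriminant form.)

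The only real work is in the second paragraph: constructing the semilinear operator $\Phi$ on $D_{k}$ intrinsically, verifying that $\ker d$ is strictly characteristic for it, and---most importantly---matching this Frobenius picture with the Verschiebung description $x_{i}=V^{i}x_{0}$ of the preamble, so that the canonical line $\bigcap_{j}\Phi^{j}(\ker d)$ really is $k\cdot x_{0}$ and $\alpha^{*}$ acts on it through $\rho(\alpha)^{\pm1}$. This is exactly the content of the cited \cite{Ny}, Theorem~2.1; granting it, the eigenvalue bookkeeping above is immediate.
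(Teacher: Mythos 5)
The paper gives no argument for this proposition at all: it is quoted verbatim from \cite{Ny}, Theorem 2.1, so there is no internal proof to compare yours against. Judged on its own, your reconstruction is essentially the standard (and, in substance, Nygaard's) argument, and the eigenvalue bookkeeping is correct: $\alpha^{*}$ is $\F_{p}$-rational on $NS(X)^{*}/NS(X)$, hence commutes with the semilinear operator $\Phi$ on $(NS(X)^{*}/NS(X))\otimes k$ and preserves the line $\ell=\bigcap_{j}\Phi^{j}(\ker d)$; semilinearity forces the eigenvalue $\zeta^{p^{\sigma}}$ on $\Phi^{\sigma}(\ell)$, strict characteristicness of $\ker d$ forces $\langle \ell,\Phi^{m}\ell\rangle=0$ for $|m|\le\sigma-1$ but $\langle \ell,\Phi^{\sigma}\ell\rangle\neq0$, and invariance of the discriminant pairing gives $\zeta^{1+p^{\sigma}}=1$, i.e.\ $N\mid p^{\sigma}+1$ since $\zeta=\rho(\alpha)^{\pm1}$ by Proposition~\ref{pro} and Serre duality (this is the same normalization $\alpha^{*}x_{i}=\xi^{-p^{-i}}x_{i}$, $\alpha^{*}y_{i}=\xi^{p^{-i}}y_{i}$ used in the paper's proof of Theorem~\ref{thm1}, case (2)). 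Note that the dual-basis relation $\langle x_{0},y_{0}\rangle\neq0$ alone yields no constraint; the divisibility really does come from the extra input you identify, namely that Frobenius translates of $\ell$ exhaust the discriminant space and pair nontrivially only after $\sigma$ steps. One caveat: in your closing paragraph you attribute exactly these structural facts to ``\cite{Ny}, Theorem 2.1,'' which is the statement being proved, so as written the justification sounds circular. The facts you actually need (that $\ker d$ is a strictly characteristic isotropic subspace, that $\ker d=\bigoplus_{j}\Phi^{-j}\ell$, and the matching of the $\Phi$-picture with the Verschiebung basis $x_{i}=V^{i}x_{0}$) are independent structure theory due to \cite{Og1}, \cite{RS2} and \cite{N1}, and are partly recalled in Section~2 of the paper; citing those instead removes the circularity and makes your argument a complete alternative to the bare citation.
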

     \begin{rem}
     If $X$ is a supersingular K3 surface of Artin-invariant 1,  we have shown, using the crystalline Torelli theorem, that
     $\im \rho _{X} \simeq   \im \nu _{X}$ is a cyclic group of rank $p+1$, \cite{J2}, Theorem 3.3. By this result, if $\phi (p+1)>20$,
     $X$ has an automorphism which cannot be lifted to characteristic 0.
     \end{rem}

     Now we assume that $X$ is a K3 surface of finite height over $k$. There is a smooth lifting of $X$ over $W$, $\mathcal{X}/W$, with the generic fiber
     $\mathcal{X}_{K} = \mathcal{X} \otimes K$ such that the reduction map
     $$NS(\mathcal{X}_{K}) \to NS(X)$$
     is an isomorphism, \cite{NO}, \cite{LM}, \cite{J}. We say a lifting of $X$ satisfying this condition is a Neron-Severi group preserving lifting of $X$. When $\mathcal{X}$ is a Neron-Severi group preserving lifting and $\bar{K}$ is an algebraic closure of $K$, the specialization map
     $$\Aut (\mathcal{X}_{K} \otimes \bar{K}) \to \Aut(X)$$
     is an injection of finite index, \cite{LM}, Theorem 6.1.

     \begin{prop}\label{pr}
     Assume $X$ is a K3 surface of finite height over $k$. Then $\im \rho _{X}$ is finite.

     \end{prop}
     \begin{proof}
     Let $\mathcal{X} /W$ be a Neron-Severi group preserving lifting of $X$ and $\mathcal{X}_{K}/K$ be the
     the generic fiber of $\mathcal{X}/W$.
     Since $K$ is of characteristic 0, the image of $\Aut (\mathcal{X}_{\bar{K}}) \to GL(H^{0}(\mathcal{X}_{\bar{K}}, \Omega^{2} _{\mathcal{X}_{\bar{K}}
     /\bar{K}}))$ is finite.
      Therefore the image of $\Aut (\mathcal{X}_{\bar{K}}) \hookrightarrow \Aut(X) \to GL(H^{0}(X, \Omega _{X/k}^{2}))$ is also finite. Since $\Aut(\mathcal{X} _{\bar{K}})$ is of finite index in $\Aut(X)$, the image of $\rho _{X}$ is finite.
     \end{proof}
     Assume $X$ is a K3 surface of finite height over $k$.
     Let $T_{l}(X)$ be the orthogonal complement of the cycle map
         $$NS(X) \otimes \Z_{l} \hookrightarrow H^{2}_{\acute{e}t}(X, \Z_{l})$$
     for $l \neq p$
     and $T_{cris}(X)$ be the orthogonal complement of the cycle map
     $$NS(X) \otimes W \hookrightarrow H^{2}_{cris}(X/W).$$
     We say $T_{l}(X)$ and $T_{cris}(X)$ are the $l$-adic transcendental lattice of $X$ and the crystalline transcendental lattice of $X$ respectively.
     When $\rho(X)$ is the Picard number of $X$, the ranks of $T_{l}(X)$ and $T_{cris}(X)$ are $22-\rho(X)$.
     Note that
     $$H^{2}(X, W\mathcal{O}_{X}) \oplus H^{2}_{cris}(X/W)_{[1+1/h]}$$
     is a direct factor of $T_{cris}(X)$ and
     $H^{2}(X, W\mathcal{O}_{X}) / V \simeq H^{2}(X,\mathcal{O}_{X})$.
     Hence there is a canonical projection
     $T_{cris}(X) \to H^{2}(X,\mathcal{O}_{X})$.
     Let
     \begin{center}
     $\chi _{l,X} : \Aut(X) \to O(T_{l}(X))$ and $\chi _{cris,X} : \Aut(X) \to O(T_{cris}(X))$
     \end{center}
     be the canonical representations.
     \begin{prop}\label{pr'}
     Assume $X$ is a K3 surface of finite height over $k$.
     The images of $\chi _{l,X}$ and $\chi _{cris,X}$ are finite and there is an isomorphism $\psi _{l} : \im \chi _{l,X} \to \im \chi _{cris,X}$ such that
     $\psi _{l} \circ \chi _{l,X} = \chi _{cris,X}$.
     \end{prop}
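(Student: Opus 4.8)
\textbf{(Reduction.)} It suffices to prove two things: that $\im\chi_l$ and $\im\chi_{cris}$ are finite, and that $\ker\chi_l=\ker\chi_{cris}$ inside $\Aut(X)$. Granting both, the rule $\chi_l(\alpha)\mapsto\chi_{cris}(\alpha)$ is a well-defined group homomorphism $\psi_l\colon\im\chi_l\to\im\chi_{cris}$ (well-defined because $\ker\chi_l\subseteq\ker\chi_{cris}$); it is onto since $\psi_l\circ\chi_l=\chi_{cris}$, and it is injective since $\ker\chi_{cris}\subseteq\ker\chi_l$. So I would treat the two parts separately.

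\textbf{(Finiteness.)} I would run the same lifting argument used for Proposition \ref{pr}. Pick a Neron--Severi preserving lift $\mathbb{X}/W$, let $X_{\bar K}$ be its geometric generic fibre, and recall from Lemma \ref{em} that $\lambda\colon\Aut(X_{\bar K})\hookrightarrow\Aut(X)$ has finite index. Smooth proper base change identifies $H^2_{et}(X,\Z_l)$ with $H^2_{et}(X_{\bar K},\Z_l)$ compatibly with cycle classes, hence identifies $T_l(X)$ with $T_l(X_{\bar K})=T(X_{\bar K})\otimes\Z_l$ and intertwines $\chi_l\circ\lambda$ with the representation of $\Aut(X_{\bar K})$ on the transcendental lattice of the characteristic-zero K3 surface $X_{\bar K}$, which is finite by Nikulin's theorem (\cite{Ni1}). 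Similarly the Berthelot--Ogus isomorphism $H^2_{cris}(X/W)\otimes_W K'\simeq H^2_{dr}(X_{K'}/K')$ (\cite{BO1}), together with the de Rham comparison over $\bar K$, identifies $T_{cris}(X)\otimes_W K'$, $\Aut(X_{\bar K})$-equivariantly, with $T(X_{\bar K})\otimes\bar K$, so $\chi_{cris}\circ\lambda$ also has finite image (the restriction map $O(T_{cris}(X))\to GL(T_{cris}(X)\otimes K')$ is injective). Finally, a homomorphism with finite image on a finite-index subgroup has finite image, since the whole group is a finite union of cosets of that subgroup; so $\im\chi_l$ and $\im\chi_{cris}$ are finite.

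\textbf{(Equality of kernels.)} Here I would invoke the Lefschetz fixed point formula, which holds verbatim in $l$-adic \'etale cohomology and in crystalline cohomology: for each $n\geq 1$ the cycle $\Gamma_{\alpha^n}\cdot\Delta$ on $X\times_k X$ has a well-defined degree in $\Z$, and this integer equals $\sum_i(-1)^i\Tr(\alpha^{n*}\mid H^i)$ computed in either theory, so the two trace alternating-sums agree. Since $H^1=H^3=0$ and $\alpha^*$ acts as the identity on $H^0$ and $H^4$, this gives $\Tr(\alpha^{n*}\mid H^2_{et}(X,\Q_l))=\Tr(\alpha^{n*}\mid H^2_{cris}(X/W)\otimes K)$ for all $n\geq 1$. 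The cycle maps $NS(X)\otimes\Q_l\hookrightarrow H^2_{et}(X,\Q_l)$ and $NS(X)\otimes W\hookrightarrow H^2_{cris}(X/W)$ are $\Aut(X)$-equivariant and carry one and the same representation $\mu$, so their contributions to the two traces agree and cancel, leaving $\Tr(\chi_l(\alpha)^n)=\Tr(\chi_{cris}(\alpha)^n)$ for all $n\geq 1$. By the finiteness just proved, $\chi_l(\alpha)$ and $\chi_{cris}(\alpha)$ have finite order, hence are semisimple, and $T_l(X)$, $T_{cris}(X)$ both have rank $22-\rho(X)$; so equality of all power sums of the eigenvalues forces the two characteristic polynomials to coincide (Newton's identities in characteristic zero). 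Therefore $\chi_l(\alpha)=\mathrm{id}$ iff that common polynomial is $(t-1)^{22-\rho(X)}$ iff $\chi_{cris}(\alpha)=\mathrm{id}$, whence $\ker\chi_l=\ker\chi_{cris}$.

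\textbf{(Main difficulty.)} The delicate point is the finiteness of $\im\chi_{cris}$: one must check that the automorphism of $X_{\bar K}$, its canonical extension to $\mathbb{X}_{\mathfrak{o}_{K'}}$ (\cite{MM}), and its reduction $\lambda(\alpha)$ all act compatibly through the crystalline--de Rham and de Rham--Betti comparison isomorphisms, so that the action on $T_{cris}(X)\otimes K'$ is literally the base change of the action on the Betti transcendental lattice; the automorphisms of $X$ that do not lift are then disposed of by the coset argument. The appeal to the Lefschetz trace formula in crystalline cohomology is classical but should be referenced with care.
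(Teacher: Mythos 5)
Your proposal is correct and follows essentially the same route as the paper: finiteness via the Neron--Severi preserving lift, the comparison isomorphisms to the characteristic-zero fibre, Nikulin's finiteness, and the finite-index Lemma \ref{em}; and equality of kernels via agreement of the (integral) characteristic polynomials of $\alpha^*$ on $H^2$ in the two theories, subtracting the common $NS(X)$ part and using semisimplicity of the finite-order operators. The only difference is that where you rederive the coincidence of the characteristic polynomials from the Lefschetz trace formula in the two Weil cohomologies plus Newton's identities, the paper simply cites this fact from Illusie (\cite{I0}, 3.7.3).
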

     \begin{proof}
     Let $\mathcal{X}/W$ be a Neron-Severi group preserving lifting of $X$ with the generic fiber $\mathcal{X}_{K} = \mathcal{X} \otimes K$.
     $\Aut (\mathcal{X}_{\bar{K}})$ is a subgroup of $\Aut(X)$ of finite index.
     If we identify $H^{2}_{\acute{e}t}(\mathcal{X}_{\bar{K}}, \Z_{l})$ with $H^{2}_{\acute{e}t}(X, \Z_{l})$,
     $T_{l}(X)$ is equal to the orthogonal complement of the cycle map
     $$NS(\mathcal{X} _{\bar{K}}) \otimes \Z_{l} \hookrightarrow H^{2}_{\acute{e}t}(\mathcal{X}_{\bar{K}},\Z_{l}).$$
     Because $\bar{K}$ is of characteristic 0, the action of $\Aut (\mathcal{X}_{\bar{K}})$ on $T_{l} (X)$ has a finite image.
     Therefore $\im \chi _{l,X}$ is finite. In a similar way,
     there is a canonical isomorphism
     $$H^{2}_{dr}(\mathcal{X}_{\bar{K}}/\bar{K}) \simeq H^{2}_{cris}(X/W) \otimes \bar{K},$$
     which is compatible with the action of $\Aut (\mathcal{X}_{\bar{K}})$ on both sides, \cite{BO1}, Corollary 2.5.
     Also this isomorphism is compatible with two cycle maps, loc.cit., Corollary 3.7,
     \begin{center}
     $NS(\mathcal{X}_{\bar{K}}) \to H^{2}_{dr}(\mathcal{X}_{\bar{K}}/\bar{K})$ and $NS(X) \to H^{2}_{cris}(X/W) \otimes \bar{K}$.
     \end{center}
     It follows that the action of $\Aut (\mathcal{X}_{\bar{K}})$ on $T_{cris}(X)$ has a finite image and so does the action of $\Aut(X)$ on $T_{cris}(X)$.
              When $\alpha$ is an automorphism of $X$, the characteristic polynomials of $\alpha ^{*}| H^{2}_{cris}(X/K)$ and $\alpha ^{*}| H^{2}_{\acute{e}t}(X,\Q_{l})$ are equal to each other and have integer coefficients, \cite{I0}, 3.7.3.
              Note that the characteristic polynomial of $\alpha ^{*}|H^{2}_{\acute{e}t}(X, \Q_{l})$ is the product of the characteristic polynomial
              of $\alpha ^{*}| NS(X)$ and the characteristic polynomial of $\chi _{l,X}(\alpha)$. Also the characteristic polynomial of
              $\alpha ^{*}| H^{2}_{cris}(X/K)$ is the product of the characteristic polynomial of $\alpha ^{*}| NS(X)$ and the characteristic polynomial of
              $\chi _{cris,X}(\alpha)$.
               Because
         $NS(X)$ is an integral lattice,
         the characteristic polynomial of $\alpha ^{*} | NS(X)$ is also integral and the characteristic polynomials of
      $\chi_{l,X}(\alpha)$ and $\chi _{cris,X}(\alpha)$ are equal to each other and integral.
     Since $\chi_{l,X}(\alpha)$ and $\chi_{cris,X}(\alpha)$ are of finite orders, they are semi-simple and all their eigenvalues are roots of unity.
     It follows that
          $\chi _{l,X}(\alpha)=id$ if and only if $\chi _{cris,X}(\alpha)=id$.
     Therefore $\ker \chi_{l,X}= \ker \chi_{cris,X}$ and there exists a compatible isomorphism
     $\psi_{l} : \im \chi _{l,X} \to \im \chi _{cris,X}.$
      \end{proof}
      Using the projection $T_{cris}(X) \to H^{2}(X, \mathcal{O}_{X})$ and the Serre duality, we have a canonical projection $p_{cirs, X} : \im \chi _{cris,X} \to \im \rho _{X}$
       such that $p _{cris,X} \circ \chi_{cris,X} = \rho _{X}$. Composing with
     $\psi _{l}$, we have a canonical projection $p_{l,X} = p _{cris,X} \circ \psi _{l} : \im \chi _{l,X} \to \im \rho _{X}$.
          \begin{prop}\label{prop}
     Assume $X$ is a K3 surface of finite height and $\alpha$ is an automorphism of $X$. If the order of $\chi_{l,X}(\alpha)$ is prime to $p$ and
     the order of $\rho _{X}(\alpha)$ is $n$,
     all the primitive $n$-th roots of unity appear as eigenvalues of $\chi_{cris,X}(\alpha)$.
     \end{prop}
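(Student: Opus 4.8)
The plan is to move the question into crystalline cohomology and then into its slope $<1$ part, where the Dieudonné structure pins the answer down. By Proposition~\ref{pr'}, $\chi_l(\alpha)$ and $\chi_{cris}(\alpha)$ have the same order and the same characteristic polynomial $P(t)$, and that polynomial lies in $\Z[t]$. Since the minimal polynomial over $\Q$ of a primitive $n$-th root of unity is the irreducible cyclotomic polynomial $\Phi_n$, it suffices to exhibit a \emph{single} primitive $n$-th root of unity among the eigenvalues of $\chi_{cris}(\alpha)$: then $\Phi_n \mid P$, and every primitive $n$-th root of unity is automatically an eigenvalue of $\chi_l(\alpha)$.

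To produce such an eigenvalue I would restrict attention to the slope $1-1/h$ summand $H^2(X,W\mathcal{O}_X)=\mathbb{D}(\widehat{Br}_X)$, which sits inside $T_{cris}(X)$ as an $\alpha^*$-stable $W$-submodule, so the eigenvalues of $\alpha^*$ on it are among those of $\chi_{cris}(\alpha)$. As a $W$-module $\mathbb{D}(\widehat{Br}_X)=W[F,V]/(FV-p,\;F-V^{h-1})$ equals $W[V]/(V^h-p)$, that is, the ring of integers $\mathcal{O}_L$ of the totally ramified degree-$h$ extension $L=K[V]/(V^h-p)$, with $V$ a uniformizer. The crux is that $\alpha^*$ is $W$-linear and commutes with $V$ (the same fact used above in the supersingular discussion, holding in general by functoriality of the formal Brauer group); hence $\alpha^*$ is $\mathcal{O}_L$-linear, and being an automorphism of the free rank-one $\mathcal{O}_L$-module $\mathbb{D}(\widehat{Br}_X)$ it is multiplication by a unit $u\in\mathcal{O}_L^{\times}$. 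This reduces everything to computing the order of $u$.

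For that, note first that the order of $u$ equals the order of $\alpha^*$ on $\mathbb{D}(\widehat{Br}_X)$, which divides the order of $\chi_{cris}(\alpha)$ and so is prime to $p$ by hypothesis; thus $u$ is a root of unity of prime-to-$p$ order, hence coincides with the Teichm\"uller lift of its reduction $\bar u\in k^{\times}$, and $u$ and $\bar u$ have the same order. Reducing modulo the uniformizer $V$ identifies $\mathbb{D}(\widehat{Br}_X)/V$ with $H^2(X,\mathcal{O}_X)$, on which $\alpha^*$ acts as multiplication by $\bar u$; by Serre duality this action has order $n$, the order of $\rho(\alpha)$. Hence $\bar u$, and so $u$, has order exactly $n$, and since $\gcd(n,p)=1$ the lift $u$ already lies in $W$ and is a primitive $n$-th root of unity. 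It is therefore an eigenvalue of $\chi_{cris}(\alpha)$, which together with the first paragraph completes the argument.

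I do not expect a serious obstacle here: the only points needing care are (i) that $\alpha^*$ genuinely commutes with Verschiebung on $H^2(X,W\mathcal{O}_X)$ and that this module really is the slope $<1$ summand of $T_{cris}(X)$, and (ii) the bookkeeping that the prime-to-$p$ hypothesis is exactly what prevents a primitive $p^a n$-th root of unity with $a>0$ from appearing in place of a primitive $n$-th root — dropping it, the argument would only yield that \emph{some} $\Phi_{p^a n}$ divides $P$. The actual content is the observation in the second paragraph that $\alpha^*$ acts on $\mathbb{D}(\widehat{Br}_X)$ as multiplication by a unit of a complete local ring, after which the proof is a one-line order computation followed by the integrality/cyclotomic argument of the first paragraph.
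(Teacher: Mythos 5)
Your first paragraph (same characteristic polynomial for $\chi_l(\alpha)$ and $\chi_{cris}(\alpha)$, integrality, and the reduction to exhibiting a single primitive $n$-th root of unity) is exactly the paper's framework, and your reduction modulo $V$ to $H^2(X,\mathcal{O}_X)$ with Serre duality is also how the eigenvalue $\xi$ enters there. The gap is in the middle of your second paragraph: the identification of $\mathbb{D}(\widehat{Br}_X)=W[F,V]/(FV-p,\,F-V^{h-1})$ with the ring of integers $\mathcal{O}_L$ of a totally ramified \emph{commutative} extension $K[V]/(V^h-p)$ is false, because $V$ is $\sigma^{-1}$-semilinear over $W$ (one has $Va=a^{\sigma^{-1}}V$), and here $k$ is algebraically closed, so $\sigma\neq \mathrm{id}$. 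The correct statement is that $\alpha^*$, being $W$-linear and commuting with $F$ and $V$, is right multiplication by a unit $u=\alpha^*(1)$ of the \emph{twisted} (noncommutative) ring; this ring is a maximal order in the division algebra of invariant $\pm 1/h$ over $\Q_p$, not a complete discrete valuation ring. In such an order a root of unity of prime-to-$p$ order need not lie in $W$: for instance $u=c\,[\zeta]\,c^{-1}$ with $c=1+bV$ and $\zeta^p\neq\zeta$ has order $n$ prime to $p$ but nonzero $V$-component. So your step ``$u$ already lies in $W$, hence is itself an eigenvalue'' is unjustified as written, and it is precisely the step that produces the eigenvalue; without $u\in W$, ``multiplication by a unit of order $n$'' does not by itself exhibit any primitive $n$-th root of unity among the eigenvalues of $\alpha^*$ on $\mathbb{D}(\widehat{Br}_X)\otimes K$.

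The conclusion you want is nevertheless true and the repair is the computation the paper performs in Theorem \ref{thm1}: writing $u=a_0+a_1V+\cdots+a_{h-1}V^{h-1}$, the matrix of $\alpha^*$ on $\mathbb{D}(\widehat{Br}_X)/p$ in the basis $1,V,\dots,V^{h-1}$ is triangular with diagonal $\bar a_0^{\,p^{0}},\bar a_0^{\,p^{-1}},\dots,\bar a_0^{\,p^{1-h}}$, where $\bar a_0=\bar u=\xi^{-1}$ by Serre duality; since $\alpha^*|\mathbb{D}(\widehat{Br}_X)$ has finite order prime to $p$, its eigenvalues are prime-to-$p$ roots of unity determined by their reductions, hence are the Teichm\"uller lifts of the $\xi^{-p^{-i}}$, all primitive $n$-th roots of unity. (Your auxiliary claims that the reduction map kills no prime-to-$p$ torsion, so that $u$ and $\bar u$ have the same order, do survive in the noncommutative setting, but they need the ``$1+VR$ is pro-$p$'' argument rather than the commutative Hensel/Teichm\"uller picture.) Note also that the paper's own proof of this proposition is shorter and avoids the Dieudonn\'e module altogether: it uses the inclusion $H^0(X,\Omega^2_{X/k})\hookrightarrow T_{cris}(X)/p$ to see $\xi$ as an eigenvalue of $\chi_{cris}(\alpha)$ modulo $p$, lifts it using the prime-to-$p$ finite order, and then applies the integrality argument you gave in your first paragraph.
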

     \begin{proof}
     Let $\zeta =\rho _{X}(\alpha) \in k^{*}$ and $\xi$ be the Teichm\"{u}ller lifting of $\zeta$ in $W$. Since there is a projection
      $$T_{cris}(X)/p \to H^{2}(X, \mathcal{O}_{X})$$
      $\zeta ^{-1}$ is an eigenvalue of $\alpha ^{*}|( T_{cris}(X)/p)$ and $\xi ^{-1}$ is an
      eigenvalue of $\chi _{cris,X}(\alpha)$. Because the characteristic polynomial of $\chi _{cris,X}(\alpha)$ is integral and $\xi ^{-1}$ is a primitive $n$-th root of unity, the $n$-th cyclotomic polynomial divides the characteristic polynomial of $\chi _{cris, X}(\alpha)$.
      Therefore every primitive $n$-th root of
     unity is an eigenvalue of $\chi _{cris,X}(\alpha)$.
     \end{proof}
     \begin{rem}
     Because the rank of the transcendental lattice is not greater than 21 and the degree of the $n$-th cyclotomic polynomial is $\phi (n)$,
     if an $n$-th root of unity appears as an eigenvalue of
     $\chi _{cris,X}(\alpha)$, then $\phi (n) \leq 21$. Here $\phi$ is the Euler $\phi$-function. In particular, if $p\geq 23$, a $p$-th root of unity can not
     appear as an eigenvalue on $\chi_{cris,X}(\alpha)$.
          \end{rem}
     \begin{thm}\label{thm1}
     Let $k$ be an algebraically closed field of odd characteristic $p$.
     Assume $X$ is a K3 surface over $k$ and  $\alpha$ is an automorphism of $X$.
     We assume either of the following :\\

     \begin{tabular}{ll}

      &(1) $X$ is of finite height $h$ and the order of $\chi_{l,X}(\alpha)$ is prime to $p$\\

     or & \\

     & (2) $X$ is supersingular of Artin-invariant $\sigma$ and the order of $\alpha$ is finite and prime to $p$.
     \end{tabular}\\

     Suppose $\zeta = \rho _{X}(\alpha)$ and $\xi$ is the Teichm\"{u}ller lift of $\zeta$ in $W$.

     Then
     in the case (1),
     $\xi^{\pm p^{0}}, \xi ^{\pm p^{-1}}, \cdots, \xi ^{\pm p^{1-h}}$ appear as eigenvalues of $\chi _{cris,X}(\alpha)$
     and in the case (2),
     $\xi^{\pm p^{0}}, \xi ^{\pm p^{-1}} , \cdots , \xi ^{\pm p^{1-\sigma}}$ appear as eigenvalues of $\alpha^{*}|H^{2}_{cris}(X/W)$.

     \end{thm}

     \begin{proof}
     {\bf First case : X is of finite height}\\
     Assume $X$ is of finite height $h$ and the order of $\chi _{l}(\alpha)$ is prime to $p$.
     Let us identify $H^{2}(X,W\mathcal{O}_{X})$ with $W[F,V]/(FV-p, F-V^{h-1})$.
     Let $f:W \to W$ be the Frobenius morphism.
     We assume
     $$\alpha ^{*}(1) = a_{0}1 + a_{1} V + \cdots + a_{h-1} V^{h-1}.$$
     Here $1 \in W[F,V]/(FV-p,F-V^{h-1})$ is a $V$-adic topological generator and $a_{i} \in W$. Note that $a_{0}$ is a unit of $W$.
     Then
     $$\alpha ^{*}(V^{i}) =V^{i} \alpha ^{*}(1) = f^{-i}(a_{0})V ^{i} + f^{-i}(a_{1}) V^{i+1} + \cdots + f^{-i}(a_{h-1}) V^{h+i-1}$$
     for $i \leq h-1$.
     But $H^{2}(X,W\mathcal{O}_{X})/V = H^{2}(X,\mathcal{O}_{X})$, so $a_{0} \equiv \zeta ^{-1}$ modulo $p$.
     The matrix      of $\alpha ^{*}| (H^{2}(X,W\mathcal{O}_{X})/p)$ with respect to a basis $1+(p), V+(p), \cdots, V^{h-1} + (p)$ is
     $$ \left(
     \begin{array}{cccc}
     \zeta ^{-1} & \cdots & \cdots & \cdots \\
     0 & \zeta ^{-p^{-1}} & \cdots & \cdots \\
     \vdots & \vdots & \ddots & \vdots \\
     0& 0 & \cdots & \zeta ^{-p^{1-h}}
     \end{array}      \right) $$
     and the characteristic polynomial of
     $\alpha ^{*}| (H^{2}(X,W\mathcal{O}_{X})/p)$ is
     $$\prod _{i=0}^{h-1} (T- \zeta ^{- p^{-i}}).$$
     Since $\alpha ^{*}| H^{2}(X, W\mathcal{O}_{X})$ is of finite order prime to $p$, the characteristic polynomial of $\alpha ^{*} | H^{2}(X, W\mathcal{O}_{X})$ is
     $$\prod _{i=0}^{h-1} (T- \xi ^{- p^{-i}}).$$
      Because $H^{2}_{cris}(X/W)_{[1+1/h]}$ is dual to $H^{2}(X,W\mathcal{O}_{X})$, the characteristic polynomial of
     $\alpha ^{*} | H^{2}_{cris}(X/W)_{[1+1/h]}$ is
     $$\prod _{i=0}^{h-1} (T- \xi ^{ p^{-i}}).$$
      Hence the claim follows.\\

      {\bf Second case : X is supersingular}\\
     Assume $X$ is supersingular of Artin-invariant $\sigma$ and $\alpha$ is of finite order prime to $p$.
      Fix $x_{0}$, a non-zero element of
       $$\ker dV^{\sigma -1} : H^{2}(X,W\mathcal{O}_{X}) \to H^{2}(X, W\Omega ^{1}_{X}).$$
       Let $x_{i} = V^{i}x$ for $i=0,1, \cdots , \sigma -1$ and $y_{i}$ is the dual basis of $x_{i}$ in $(NS(X)^{*}/NS(X)) \otimes k$.
      Then $\alpha ^{*} x_{i} = \zeta ^{-p^{-i}} x_{i}$ and $\alpha ^{*} y_{i} =
     \zeta ^{p^{-i}} y_{i}$.
         Because there is an embedding
         $$(NS(X) ^{*} /NS(X)) \otimes k \simeq (pNS(X) ^{*}/ p NS(X)) \otimes k \subseteq (NS(X) \otimes W)/p,$$
        $\xi^{\pm p^{0}}, \xi ^{\pm p^{-1}} , \cdots , \xi ^{\pm p^{1-\sigma}}$ occur as eigenvalues of $\alpha^{*}|NS(X) \otimes W$, so as eigenvalues of $\alpha ^{*}|NS(X)$. Since
        $H^{2}_{cris}(X, W) \otimes K = NS(X) \otimes K$, the claim follows
     \end{proof}

     When $X$ is a complex algebraic K3 surface, the projection $p_{X} : \im \chi_{X}  \to \im \rho _{X}$ is an isomorphism and
     the action of $\Aut (X)$ on the transcendental lattice $T(X)$
     is determined by the action on $H^{0}(X, \Omega ^{2}_{X/\C})$, \cite{Ni1}.
     Moreover if $N$ is the order of $\im \rho _{X}$ and  $\xi _{N}$ is a primitive $N$-th root of unity, by the Lefschtz (1,1) theorem, $T(X)$ is a torsion free $\Z [\xi _{N}]$-module.
     Because $\phi (N) <22$, $\Z[\xi_{N}]$ is a P.I.D,, \cite{MO}, so $T(X)$ is a free $\Z[\xi_{N}]$-module.
     It follows that the rank of $T(X)$ is a multiple of $\phi (N)$.
     We can ask if the same result holds for a K3 surface of finite height in odd characteristic.
     \begin{thm}\label{thm2}
     Let $k$ be an algebraic closure of a finite field of odd characteristic $p$ and $X$ be a K3 surface of finite height $h$ over $k$.
     If the order of $\im \chi_{l,X}$ is not divisible by $p$,
     the projection $p _{l,X} : \im \chi _{l,X} \to \im \rho _{X}$ is bijective.
     \end{thm}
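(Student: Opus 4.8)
The plan is to show that the surjection $p_l$ is injective, i.e.\ that $\rho(\alpha)=1$ forces $\chi_l(\alpha)=1$ for every $\alpha\in\Aut(X)$. Since $|\im\chi_l|$ is prime to $p$, each $\chi_l(\alpha)$, and hence by Proposition~\ref{pr'} each $\chi_{cris}(\alpha)$, has finite order $m$ prime to $p$. I would reduce this to the classical theorem of Nikulin that a symplectic automorphism of a complex algebraic K3 surface acts trivially on its transcendental lattice — more precisely, to the statement that $\tilde p\colon\im\tilde\chi\to\im\tilde\rho$ is an isomorphism, so that the actions on $T$ and on $H^0(\Omega^2)$ of a complex K3 surface have the same order.

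To apply this for an arbitrary $\alpha$ I would lift the pair $(X,\alpha)$ to characteristic zero equivariantly and Neron--Severi preservingly. Here $\alpha$ acts on the Neron--Severi preserving deformation space $\mathfrak D=\mathrm{Def}(X,NS(X))$ over $W$; by the crystalline (de Rham) period description of $\mathfrak D$ (local Torelli, \cite{NO}, \cite{Og2}) this is a formally smooth formal scheme over $W$ on which $\alpha$ acts functorially through its action on $T_{cris}(X)\bmod p$, hence with finite order $m$ prime to $p$. Averaging over the finite group $\langle\alpha\rangle$ (division by $m$ is legitimate since $m$ is a unit in $W$) linearises this action, so $\alpha$ has a fixed $W$-point on $\mathfrak D$; this yields an $\alpha$-equivariant Neron--Severi preserving lift $\mathbb X/W$, automatically algebraic because $NS(X)$ lifts. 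Writing $\alpha=\lambda(\tilde\alpha)$ with $\tilde\alpha\in\Aut(\mathbb X_{\bar K})$ (after a finite extension of $K$, cf.\ Lemma~\ref{em}), the comparison $H^2(\mathbb X_{\bar K},\Z)\otimes\Z_l\simeq H^2_{\acute{e}t}(X,\Z_l)$ together with $NS(\mathbb X_{\bar K})\simeq NS(X)$ identifies $T_l(X)$ with $T(\mathbb X_{\bar K})\otimes\Z_l$ compatibly, so $\chi_l(\alpha)$ corresponds to $\tilde\chi(\tilde\alpha)$; and $\rho(\alpha)$ is the reduction modulo $\mathfrak p$ of the root of unity $\tilde\rho(\tilde\alpha)$ by which $\tilde\alpha$ acts on $H^0(\mathbb X_{\bar K},\Omega^2)$. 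By Nikulin's theorem $\tilde\chi(\tilde\alpha)$ and $\tilde\rho(\tilde\alpha)$ have the same order; this order equals the order $m$ of $\chi_l(\alpha)$ and so is prime to $p$, whence reduction modulo $\mathfrak p$ is injective on $\mu_m$ and $\rho(\alpha)$ also has order $m$. Therefore $\rho(\alpha)=1$ forces $m=1$, i.e.\ $\chi_l(\alpha)=1$.

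The main obstacle is this equivariant lifting step — making precise that $\mathfrak D$ is formally smooth over $W$ with an automorphism-equivariant period map, so that $\alpha$ acts on it with finite order prime to $p$ and a fixed $W$-point exists. For $\alpha$ already lying in the image of the finite-index inclusion $\lambda$ attached to a single Neron--Severi preserving lift, no equivariant lift is needed and the second paragraph applies directly; the content is in treating the remaining automorphisms. An alternative, closer to the earlier sections and using the hypothesis $k=\overline{\F_q}$ more essentially, would be to observe first that the matrix computation in the proof of Theorem~\ref{thm1} already forces $\alpha^*$ to act as the identity on the two formal-Brauer summands $H^2(X,W\mathcal{O}_{X})$ and $\Hom(H^2(X,W\mathcal{O}_{X}),W(p^2))$ of $T_{cris}(X)$ when $\rho(\alpha)=1$ (the diagonal eigenvalues lie in $1+pW$ and $\alpha^*$ is semisimple of order prime to $p$), and then to handle the remaining slope-$1$ summand by the Tate conjecture for K3 surfaces over finite fields. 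In either case one obtains $\ker\chi_l=\ker\rho$, hence the bijectivity of $p_l$; as a by-product $\im\chi_l\cong\im\rho$ is cyclic.
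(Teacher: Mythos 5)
There is a genuine gap in your main route. The whole argument hinges on the equivariant Neron--Severi preserving lifting of the pair $(X,\alpha)$, and this is precisely the step you do not establish. Note that the hypothesis is only that $\im\chi_{l}$ has order prime to $p$; the automorphism $\alpha$ itself may well have infinite order (it can act with infinite order on $NS(X)$), so you cannot average over $\langle\alpha\rangle$ acting on the deformation space as it stands. Your way out is the assertion that the action of $\alpha$ on the NS-preserving deformation space factors through its action on $T_{cris}(X)$ (or $T_{cris}(X)\bmod p$), hence has finite order prime to $p$; but this is exactly the nontrivial content one would have to prove (an equivariant local Torelli/period description of NS-preserving deformations at finite height, plus formal smoothness of the fixed locus and algebraization), and nothing in the paper or in the cited results hands it to you. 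The paper deliberately avoids lifting $\alpha$ at all: its proof of Theorem \ref{thm2} never produces an equivariant model and instead exploits the Frobenius of the finite base field, which is also why the finite-field hypothesis appears in the statement. Treating only those $\alpha$ in the finite-index image of $\lambda$ (as you note) does not suffice, since the kernel of $\rho$ need not be contained in that image.

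Your alternative sketch is in fact the paper's strategy, but it stops exactly where the real work begins. The first half is fine: when $\rho(\alpha)=1$, the matrix computation from Theorem \ref{thm1} together with semisimplicity of $\chi_{cris}(\alpha)$ (finite order prime to $p$, via Proposition \ref{pr'}) gives $\alpha^{*}=\mathrm{id}$ on the two slope $1\mp 1/h$ summands of $T_{cris}(X)$, equivalently $t_{i}=1$ for every Frobenius eigenvalue $s_{i}$ with $\ord_{q}(s_{i})<1$. But ``handle the remaining slope-$1$ summand by the Tate conjecture'' is not an argument: the Tate conjecture by itself says nothing about the eigenvalues of $\alpha^{*}$ there. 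What the paper actually does is: replace $\F_{q}$ by a finite extension so that any two Frobenius eigenvalues on $V_{l}(X)$ whose ratio is a root of unity are equal, choose a common eigenbasis for the commuting semisimple operators $F^{*}$ and $\alpha^{*}$ (Deligne), use the Tate conjecture only to conclude that a slope-$1$ eigenvalue $s_{i}$ of $F^{*}$ on the transcendental part must be $\Gal(\bar\Q/\Q)$-conjugate to some $s_{j}$ of slope $<1$, and then apply $\tau$ to the eigenvalue $s_{i}t_{i}$ of $F^{*}\alpha^{*}$: since the characteristic polynomials of $F^{*}$ and $F^{*}\alpha^{*}$ are rational, $\tau(s_{i}t_{i})=s_{j}\tau(t_{i})$ must again be an eigenvalue $s_{k}t_{k}=s_{k}$ with $\ord_{q}(s_{k})<1$, forcing $\tau(t_{i})=s_{k}/s_{j}$ to be a root of unity equal to $1$ by the normalization, a contradiction if $t_{i}\neq 1$. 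Without this Galois-conjugation step (or a genuine substitute) the injectivity of $p_{l}$ on the slope-$1$ part is not proved, so as written neither of your two routes closes the argument.
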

     \begin{proof}
     Clearly $p_{l,X}$ is surjective.
     Suppose $X$ is defined over $\F _{q}$ for $q = p^{m}$.
     The $m$-iterative relative Frobenius morphism of $X/k$ is an endomorphism of $X$ over $k$. We denote this morphism by
     $F : X \to X$. The induced morphism $F^{*}| H^{2}_{\acute{e}t}(X, \Q _{l})$ is equal to the Galois action of the geometric Frobenius
     element in $\Gal(k / \F _{q})$ on $H^{2}_{\acute{e}t}(X, \Q _{l})$.
      Let $V_{l}(X) = T_{l}(X) \otimes \Q _{l}$.
     Then
     $$H^{2}_{\acute{e}t}(X, \Q _{l}) = V_{l}(X) \oplus (NS(X) \otimes \Q_{l}).$$
      Let $\varphi (T)$ be the characteristic polynomial
     of $F^{*}|V_{l}(X)$. $\varphi(T)$ is a polynomial over $\Q$ and equal to
     the characteristic polynomial of $F^{*}|T_{cris}(X)$ \cite{I0}, 3.7.3.
     Let $s_{1}, s_{2} , \cdots , s_{r}$ be the roots of $\varphi(T)$. After replacing $\F_{q}$ by a suitable finite extension,
     we may assume if $s_{i}/s_{j}$ is a root of unity then $s_{i}=s_{j}$.
     Let $\alpha$ be an automorphism of $X$. We may assume $\alpha$ is defined over $\F _{q}$ after replacing the base field $\F _{q}$ by a
     finite extension. In this case, $F \circ \alpha = \alpha \circ F$.
      Since $F^{*}$ and $\alpha ^{*}$ are semi-simple on $V_{l}(X)$, \cite{De}, there exist basis of $V_{l}(X)$ consisting of common eigenvectors for $F^{*}$ and $\alpha ^{*}$.
     We assume $t_{1}, \cdots , t_{r}$ are eigenvalues of $\chi _{l,X}(\alpha)$ and $s_{1}t_{1}, \cdots , s_{r}t_{r}$ are eigenvalues of
     $F^{*} \circ \alpha ^{*}|V_{l}(X)$. Let $\psi(T) \in \Q[T]$ be the characteristic polynomial of $F^{*} \circ \alpha ^{*}|T_{l}(V)$.
       Let us fix an embedding $\bar{\Q} \hookrightarrow \bar{K}$. There is a unique $q$-adic order $\ord _{q}( \cdot )$ on $\bar{\Q}$ associated to
      this embedding.

      Because the height of $X$ is $h$, exactly $h$ roots of $\varphi (T)$ have order $1- 1/h$ for the $q$-adic order $\ord _{q}(\cdot )$.
       Assume $\ord _{q}(s_{i}) = 1-1/h$ for $i=1,\cdots , h$. Then $s_{1}, \cdots , s_{h}$ are roots of characteristic polynomial of
      $F^{*}| H^{2}(X,W\mathcal{O}_{X})$. We assume $\rho _{X} (\alpha)=1$. By the proof of Theorem \ref{thm1}, $\alpha ^{*}| H^{2}(X,W\mathcal{O}_{X})=id$. Because
      the characteristic polynomial of $(F \circ \alpha)^{*} | V_{l}(X)$ is equal to the characteristic polynomial of $(F \circ \alpha)^{*}| T_{cris}(X)$,
      if $\ord _{q}(s_{i})<1$, $t_{i}=1$. Now assume $t_{i} \neq 1$ for some $i>h$. Because the Tate conjecture is valid for K3 surfaces, \cite{NO}, \cite{MA},
      $s_{i}$ is conjugate to $s_{j}$ over $\Q$ for some $j \leq h$. Suppose $\tau (s _{i}) = s _{j}$ for some $\tau \in \Gal (\bar{\Q}/ \Q)$. Then
      $$\tau (s_{i}t_{i})
      = s_{j} \tau(t_{i})= s_{k}t_{k}=s_{k}$$
      for some $k \leq h$. But it is impossible since $\tau(t_{i}) \neq 1$ is a root of unity. Therefore $\chi _{l,X}(\alpha)=id$ and
      $p_{l,X} : \im \chi _{l,X} \to \im \rho _{X}$ is injective.
     \end{proof}

     \begin{cor}\label{cor}
     Let $k$ be an algebraic closure of a finite field of odd characteristic $p$ and $X$ be a K3 surface of finite height over $k$. If
     $N$ is the order of $\im \rho _{X}$, then the rank of $T_{l}(X)$, $22-\rho(X)$ is divisible by $\phi (N)$.

     \end{cor}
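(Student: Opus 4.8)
The plan is to pin down the characteristic polynomial of a generator of $\im\chi_l$ on $T_l(X)$, by running the argument in the proof of Theorem~\ref{thm2} for such a generator rather than for an element killed by $\rho$. By Theorem~\ref{thm2}, $p_l\colon\im\chi_l\to\im\rho$ is an isomorphism; since $\im\rho\subseteq Gl(H^0(X,\Omega^2_{X/k}))=k^*$ is finite it is cyclic, so $\im\chi_l$ is cyclic of order $N$. Fix $\alpha\in\Aut(X)$ with $\chi_l(\alpha)$ a generator; then $\chi_l(\alpha)$ and $\xi:=\rho(\alpha)$ both have order exactly $N$. Now $\chi_l(\alpha)$ acts with order $N$ prime to $p$ on the free $\Z_l$-module $T_l(X)$ of rank $22-\rho(X)$, and its characteristic polynomial $\varphi(T)$ is integral (Proposition~\ref{pr'}), hence a product of cyclotomic polynomials $\Phi_d$ with $d\mid N$. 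It is therefore enough to prove that every eigenvalue of $\chi_l(\alpha)$ on $V_l(X):=T_l(X)\otimes\Q_l$ is a primitive $N$-th root of unity, for then $\varphi=\Phi_N^{\,e}$ and $22-\rho(X)=\deg\varphi=e\,\phi(N)$ is divisible by $\phi(N)$.

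To this end, realize $X$ and $\alpha$ over a finite field $\F_q$, enlarging $q$ so that $F\alpha=\alpha F$ for the $q$-power Frobenius $F$ and so that any two eigenvalues of $F^*$ on $V_l(X)$ whose ratio is a root of unity coincide (possible as in the proof of Theorem~\ref{thm2}). Since $F^*$ and $\alpha^*$ commute and are semisimple (\cite{De}), fix a common eigenbasis $w_1,\dots,w_r$ of $V_l(X)$, with $F^*w_i=s_iw_i$ and $\alpha^*w_i=t_iw_i$, each $t_i$ an $N$-th root of unity, and fix an embedding $\bar\Q\hookrightarrow\bar K$ with associated valuation $\ord_q$. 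As $\alpha^*$ has finite order, $\ord_q(s_it_i)=\ord_q(s_i)$ for all $i$. For the indices $i$ with $\ord_q(s_i)=1-1/h$ the $w_i$ account for the slope $1-1/h$ part $H^2(X,W\mathcal{O}_X)$ of $T_{cris}(X)$; the characteristic polynomial of $(F\alpha)^*$ agrees on $T_l(X)$ and $T_{cris}(X)$ (\cite{I0}), its slope $1-1/h$ factor being the characteristic polynomial of $(F\alpha)^*$ on $H^2(X,W\mathcal{O}_X)$, on which $\alpha^*$ has eigenvalues $\xi^{-p^{-j}}$ ($0\le j\le h-1$) by the proof of Theorem~\ref{thm1}. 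Comparing these factors and using that Frobenius eigenvalues with root-of-unity ratio are equal forces $t_i\in\{\xi^{-p^{-j}}:0\le j\le h-1\}$, a primitive $N$-th root of unity, for each such $i$.

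It remains to treat the other $i$. Because the Tate conjecture is valid for K3 surfaces (\cite{NO},\cite{MA}), every $s_i$ is $\Q$-conjugate to some $s_j$ with $\ord_q(s_j)=1-1/h$ (this is the point used in the proof of Theorem~\ref{thm2}); write $\tau(s_i)=s_j$ with $\tau\in\Gal(\bar\Q/\Q)$. The characteristic polynomial of $(F\alpha)^*$ on $V_l(X)$ is rational (\cite{I0}), so $\tau(s_it_i)=s_j\tau(t_i)$ is again an eigenvalue of $(F\alpha)^*$ on $V_l(X)$, and it has $\ord_q=1-1/h$; hence $s_j\tau(t_i)=s_kt_k$ for some $k$ with $\ord_q(s_k)=1-1/h$. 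Then $s_j/s_k=t_k/\tau(t_i)$ is a root of unity, so $s_j=s_k$ and $\tau(t_i)=t_k$. As $t_k$ is a primitive $N$-th root of unity, so is $t_i$. Therefore $\varphi=\Phi_N^{\,e}$ and $\phi(N)\mid 22-\rho(X)$.

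The one delicate point, shared with the proof of Theorem~\ref{thm2}, is the bookkeeping identifying the slope decompositions and Frobenius eigenvalues of $V_l(X)$, $T_{cris}(X)$ and $H^2_{cris}(X/W)$, together with the rationality of the characteristic polynomial of $(F\alpha)^*$ (for which one invokes \cite{I0}); granted these, the argument is purely formal.
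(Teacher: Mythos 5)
Your proposal is correct and takes essentially the same route as the paper: the paper's own proof just reruns the eigenvalue bookkeeping from the proof of Theorem \ref{thm2} with an $\alpha$ whose $\rho(\alpha)$ has order $N$ (available since $p_l$ is bijective), invokes Theorem \ref{thm1} to see that the slope-$(1-1/h)$ eigenvalues $t_1,\dots,t_h$ are primitive $N$-th roots of unity, and then propagates this to every $t_j$ by Galois conjugation using the Tate conjecture and the root-of-unity-ratio trick, so that the characteristic polynomial of $\chi_l(\alpha)$ is a power of $\Phi_N$. Your write-up only differs in spelling out more explicitly the crystalline--\'etale comparison of the slope $1-1/h$ factors, which the paper leaves implicit.
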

     \begin{proof}
     We choose $\alpha \in \Aut(X)$ such that the order of $\rho _{X}(\alpha)$ is $N$.
           Assume that the order of $\chi _{l,X} (\alpha)$ is $p^{m} n$ for some non-negative integer $m$ and $n$ where $p$ does not divides $n$.
      Since the order of $\rho _{X}(\alpha ^{p^{m}})$ is still $N$, replacing $\alpha$ by $\alpha ^{p^{n}}$, we may assume the order of $\chi _{l,X}(\alpha)$ is not divisible by $p$.
      Let $t_{i}$ be an eigenvalue of $\chi _{l,X} (\alpha)$.
     By the proof of Theorem \ref{thm2},
     $t_{i}$ is a primitive $N$-th root of unity
      and $n$ is equal to $N$. It follows that the characteristic polynomial of $\chi_{l,X}(\alpha)$ is a power of $N$-th cyclotomic polynomial over $\Q$ and the rank of $T_{l}(X)$ is a multiple of $\phi (N)$.
     \end{proof}

     \section{Non-symplectic automorphism of some high order and Frobenius invariant}
          \begin{prop}\label{prop2}
     Let $k$ be an algebraic closure of a finite field of odd characteristic $p$
     and $X$ be a K3 surface over $k$. Let $\alpha$ be an automorphism of $X$.
     We assume that the order of $\rho _{X}(\alpha)$ is $N(>2)$  and that the rank of the Neron-Severi group of $X$
     is at least $22-\phi (N)$.
     If $p^{m} \equiv -1$ modulo $N$ for some $m$, $X$ is supersingular.
     If $p^{m} \not\equiv -1$ modulo $N$ for any $m$ and the order of $p$ in $(\Z /N\Z)^{*}$ is $n$, the height of $X$ is $n$.
     \end{prop}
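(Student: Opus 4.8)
I would reduce the proposition to two implications: (a) if $X$ is supersingular with Artin invariant $\sigma$, then $p^{\sigma}\equiv -1\pmod N$; and (b) if $X$ has finite height $h$, then $p^{m}\not\equiv -1\pmod N$ for every $m$ and $h$ equals the order $n$ of $p$ in $(\Z/N\Z)^{*}$. These give the proposition formally: if $p^{m}\equiv -1\pmod N$ for some $m$, then $X$ cannot have finite height by (b), so it is supersingular; and if $p^{m}\not\equiv -1\pmod N$ for all $m$, then $X$ cannot be supersingular by (a), so it has finite height, which equals $n$ by (b). Part (a) is immediate: $\rho(\alpha)$ lies in the cyclic group $\im\rho$, so $N$ divides $|\im\rho|$, and by Proposition~\ref{Ny} the latter divides $p^{\sigma}+1$; hence $N\mid p^{\sigma}+1$.

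\textbf{Finite height: reducing to cyclotomic data.} Let $\xi$ be the primitive $N$-th root of unity lifting $\rho(\alpha)$. From $\rho(X)\le 22-2h$ and the hypothesis $\rho(X)\ge 22-\phi(N)$ we get $2h\le\phi(N)$ and $\rank T_{l}(X)=22-\rho(X)\le\phi(N)$. As in the discussion preceding the proposition we may assume the order of $\chi_{l}(\alpha)$ is prime to $p$, so Theorem~\ref{thm1}(1) applies: the characteristic polynomial of $\alpha^{*}$ on the slope $1-1/h$ summand $H^{2}(X,W\mathcal{O}_{X})=\mathbb{D}(\widehat{Br}_{X})\subseteq T_{cris}(X)$ is $\prod_{i=0}^{h-1}(T-\xi^{-p^{-i}})$, and on $H^{2}_{cris}(X/W)_{[1+1/h]}\subseteq T_{cris}(X)$ it is $\prod_{i=0}^{h-1}(T-\xi^{p^{-i}})$. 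In particular $\xi$ is an eigenvalue of $\chi_{l}(\alpha)$, and since the characteristic polynomial of $\chi_{l}(\alpha)$ is integral (proof of Proposition~\ref{pr'}) it is divisible by the $N$-th cyclotomic polynomial, whence $\rank T_{l}(X)\ge\phi(N)$. Thus $\rank T_{l}(X)=\phi(N)$ and the characteristic polynomial of $\chi_{l}(\alpha)$ is exactly the $N$-th cyclotomic polynomial; every eigenvalue of $\chi_{l}(\alpha)$ has multiplicity one. So the two lists above are genuine sets: $E_{-}:=\{\xi^{-p^{-i}}:0\le i<h\}$ and $E_{+}:=\{\xi^{p^{-i}}:0\le i<h\}$ each have exactly $h$ elements, and they are disjoint (being eigenvalue sets of $\alpha^{*}$ on distinct isoclinic summands). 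In particular $h\le n$.

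\textbf{The crux: $h=n$.} To upgrade $h\le n$ to equality I would use that the crystalline Frobenius $F$ on $\mathbb{D}(\widehat{Br}_{X})$ is $\sigma$-semilinear, commutes with $\alpha^{*}$, and is invertible after inverting $p$ (since $F^{h}=p^{h-1}$ on $\mathbb{D}(\widehat{Br}_{X})$). If $v$ is an $\alpha^{*}$-eigenvector with eigenvalue $t\in E_{-}$ — a root of unity of order prime to $p$, so $\sigma(t)=t^{p}$ — then $\alpha^{*}(Fv)=F(\alpha^{*}v)=\sigma(t)\,Fv=t^{p}\,Fv$ with $Fv\ne 0$, so $t^{p}\in E_{-}$. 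Hence $E_{-}$ is stable under $t\mapsto t^{p}$, i.e.\ a union of orbits of $\langle p\rangle$ acting on the primitive $N$-th roots of unity, each orbit having size $n$; therefore $n\mid|E_{-}|=h$, and with $h\le n$ we get $h=n$. Then $E_{-}$ and $E_{+}$ are exactly the $\langle p\rangle$-orbits of $\xi^{-1}$ and $\xi$, and their disjointness says $\xi^{-1}$ is not in the orbit of $\xi$, i.e.\ $p^{m}\not\equiv -1\pmod N$ for all $m$. This finishes part (b), hence the proposition.

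\textbf{Where the difficulty lies.} Everything outside the third paragraph is bookkeeping with characteristic polynomials together with the rank bound $\rho(X)\ge 22-\phi(N)$. The only step that requires a real idea is obtaining the exact equality $h=n$ (not merely $h\le n$), and this rests on the single observation that the $\alpha^{*}$-eigenvalue set of the formal Brauer group is closed under $t\mapsto t^{p}$, which is forced by the $\sigma$-semilinearity of the crystalline Frobenius. A minor technical point to settle is the reduction to the case where $\chi_{l}(\alpha)$ has order prime to $p$: this is automatic when $p$ is large because $\rank T_{l}(X)\le 21$, and in the remaining characteristics it follows from the classification in \cite{Ke1}.
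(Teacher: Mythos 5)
Your proof is correct, but the decisive step is carried out by a genuinely different mechanism than in the paper. You and the paper agree on the frame: the supersingular case via Proposition~\ref{Ny}, the rank hypothesis forcing the characteristic polynomial of $\chi_{l}(\alpha)$ to be exactly $\Phi_{N}(T)$ (hence multiplicity one), and the upper bound $h\le n$ from Theorem~\ref{thm1}. Where you diverge is the lower bound $h\ge n$ (and the exclusion of $p^{m}\equiv -1$): the paper descends $X$ and $\alpha$ to a finite field, uses that the geometric Frobenius $F^{*}$ on $V_{l}(X)$ commutes with $\alpha^{*}$ and hence acts as a unit of $\Q_{l}[T]/\Phi_{N}(T)$, invokes the Chebotarev density theorem to place the Frobenius eigenvalues in $\Q(\xi)$, and then uses the Tate conjecture together with a count of the primes of $\Q(\xi)$ above $p$ (there are $\phi(N)/n$ of them) to produce at least $n$ eigenvalues of slope $<1$. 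You instead stay entirely local: the $\sigma$-semilinearity of the crystalline Frobenius on $\mathbb{D}(\widehat{Br}_{X})\otimes K$, together with $F\alpha^{*}=\alpha^{*}F$ and injectivity of $F$, shows the $\alpha^{*}$-eigenvalue set $E_{-}$ is stable under $t\mapsto t^{p}$, hence is a union of $\langle p\rangle$-orbits of size $n$, so $n\mid h$; disjointness of $E_{\pm}$ (which follows from multiplicity one, not merely from the summands being distinct, so your parenthetical justification should be replaced by that remark) then rules out $p^{m}\equiv-1$. Your route buys something real: it avoids the Tate conjecture and Chebotarev altogether and never uses that $k$ is $\overline{\F}_{p}$, so it proves Corollary~3.3 (\ref{cor2}) directly, making the spreading-out/deformation argument there unnecessary; the paper's route, on the other hand, runs through the same $F^{*}$-versus-$\alpha^{*}$ analysis that powers Theorem~\ref{thm2} and Corollary~\ref{cor}, which are in any case needed to know the rank hypothesis holds when $\phi(N)>10$. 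One caveat: your reduction to the case where $\chi_{l}(\alpha)$ has order prime to $p$ is asserted (clean for $p\ge 23$ via the degree bound, deferred to \cite{Ke1} otherwise) rather than proved, but the paper's own proof tacitly relies on the same hypothesis when it invokes Theorem~\ref{thm1} and the discussion preceding the proposition, so this does not separate the two arguments.
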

     \begin{proof}
      Assume  $p^{m} \not\equiv -1$ modulo $N$ for any $m$.
      Then by Proposition \ref{Ny},
     $X$ is of finite height.\\

     We assume $X$ is of finite height.
     Then, by the assumption and Corollary 3.11, the rank of $T_{l}(X)$ is $\phi (N)$ and
     the order of $\chi _{l,X}(\alpha)$ is equal to the order of $\rho(\alpha)$.
     Every eigenvalue of $\chi _{l,X}(\alpha)$ is a primitive $N$-th root of unity and
            the characteristic polynomial of $\chi _{l,X}(\alpha)$ is the $N$-th cyclotomic polynomial over $\Q$.
            We denote the $N$-th cyclotomic polynomial over $\Q$ by $\Phi _{N}(T)$.
     Let $\zeta = \rho _{X}(\alpha)$ and  $\xi$ be the Teichm\"{u}ller lift of $\xi$ in $W$.
             Let $V_{l}(X) = T_{l}(X) \otimes \Q _{l}$.
     Since every primitive $N$-th root of unity appears once as an eigenvalue of $\chi _{l,X}(\alpha)$,  $V_{l}(X)$ is a rank 1 free module over $\Q_{l}[T]/ \Phi _{N}(T)$ by the action of $\alpha ^{*}$.
     Note that
     $$\Q _{l}[T]/ \Phi _{N} (T) =(\Q[T]/\Phi _{N}(T)) \otimes \Q_{l}
     \simeq \bigoplus _{k} \Q_{l}(\xi ^{a_{k}})$$
      for suitable primitive $N$-th roots of unity $\xi ^{a _{k}}$.
                  Suppose $X$ and $\alpha$ are defined over $\F _{q}$ for $q =p^{r}$ and $F : X \to X$ is the $r$-iterative relative Frobenius morphism of $X/k$.
                  Let $\varphi(T) \in \Q[T]$ be the characteristic polynomial of $F^{*}|V_{l}(X)$.
                  Since $F^{*} \circ \alpha ^{*} = \alpha ^{*} \circ F^{*}$, $F^{*}| V_{l}(X)$ is a
      $\Q _{l}[T]/\Phi _{N}(T)$-module endomorphism, so it is the multiplication by a unit element of $\Q_{l}[T]/ \Phi _{N}(T)$. Hence all the roots of $\varphi (T)$ are contained in $\Q_{l}(\xi)$ for any $l \neq p$.
         It follows that, by the Chebotarev density theorem, all the roots of $\varphi (T)$ are contained in $\Q (\xi)$.
             Let $n$ be the order of $p$ in $(\Z / N\Z)^{*}$. Then $\xi ^{p^{-n}} = \xi$. Since a primitive $N$-th root of unity appears only one time in the eigenvalues of $\alpha ^{*}| T_{l}(X)$,
     by Theorem \ref{thm1}, the height of $X$ is at most $n$. We fix a $q$-adic order $\ord _{q}(\cdot)$ on $\bar{\Q}$. For a root of $\varphi (T)$, $s \in \Q (\xi)$, by the Tate conjecture, there is $\tau \in \Gal(\bar{\Q} / \Q)$, such that $\ord _{q} (\tau(s)) <1$. Since $\deg \varphi (T)$ is $\phi (N)$
      and the number of primes of $\Q (\xi)$ dividing $p$ is $\phi (N)/n$, the number of roots of $\varphi (T)$ whose $\ord _{q}(\cdot)$ orders are less than 1  is at least $\phi (N) / (\phi (N)/n) = n$. Therefore the height of $X$ is at least $n$, so the height of $X$ is $n$. Now suppose $n=2m$ and
       $p^{m} \equiv -1$ modulo $N$. Then the height of $X$ is $2m$. But among $\xi ^{\pm 1}, \xi ^{\pm p^{-1}}, \cdots , \xi ^{\pm p ^{-2m+1}}$,
      $\xi $ appears twice as an eigenvalue of
     $\chi _{l,X}(\alpha)$. It contradicts to the assumption and $X$ is supersingular.
     \end{proof}

     \begin{rem}
      In the statement of the above theorem,
     the assumption that the rank of Neron-Severi group is at least $22- \phi (N)$ is satisfied if $\phi (N)>10$ by Corollary \ref{cor}.
     \end{rem}

     \begin{cor}\label{cor2}
      Let $k$ be an algebraically closed field of odd characteristic $p$
     and $X$ be a K3 surface over $k$. Let $\alpha$ be an automorphism of $X$.
     We assume that the order of $\rho _{X}(\alpha)$ is $N(>2)$ and that the rank of the Neron-Severi group of $X$
     is at least $22-\phi (N)$.\\

     \begin{tabular}{ll}
     (1) & If $p^{m} \equiv -1$ modulo $N$ for some $m$, $X$ is supersingular.\\

     (2) & If $p^{m} \not\equiv -1$ modulo $N$ for any $m$ and the order of $p$ in $(\Z /N\Z)^{*}$ is $n$,\\
      & the height of $X$ is $n$.
     \end{tabular}
     \end{cor}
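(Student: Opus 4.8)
The plan is to reduce to Proposition~\ref{prop2} by spreading $X$ out over a finitely generated base and then specialising it to a K3 surface over $\bar{\F}_p$ \emph{without changing the height} of the formal Brauer group, so that the dichotomy of Proposition~\ref{prop2} transports back to $X$. Concretely: after enlarging it, choose a subfield $k_0\subseteq k$ finitely generated over $\F_p$ over which $X$, the automorphism $\alpha$ and a full set of generators of $NS(X)$ are defined, so that $NS(X_{\bar{k}_0})\simeq NS(X)$ and $\rho(X_{\bar{k}_0})\ge 22-\phi(N)$. Spreading out (and shrinking $S$ and passing to a finite cover of it as needed), one obtains an integral $\F_p$-scheme $S$ of finite type with function field $k_0$, a smooth projective family $\pi\colon\mathcal X\to S$ of K3 surfaces with generic fibre $X_{k_0}$, an automorphism $\alpha_S$ of $\mathcal X/S$ restricting to $\alpha$ at the generic point $\eta$, and relative Cartier divisors inducing, for each $s\in S$, an injection $NS(X)\hookrightarrow NS(\mathcal X_{\bar s})$; in particular $\rho(\mathcal X_{\bar s})\ge 22-\phi(N)$ for every $s$, and the closed points of $S$ have finite residue fields.

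The heart of the argument is the choice of a good closed point. The line bundle $\pi_*\Omega^2_{\mathcal X/S}$ has formation commuting with base change, and $\alpha_S$ acts on it as multiplication by a unit $u\in\mathcal O_S(S)^{*}$ with $u(\eta)=\rho(\alpha)$, of order $N$; since $S$ is integral this forces $u^N=1$ identically, and the locus $S^{\circ}$ on which $u$ has exact order $N$ is open (it is cut out by the conditions $u^{N/\ell}\ne 1$ for the primes $\ell\mid N$) and contains $\eta$, hence is dense. On the other hand, by Grothendieck's specialisation theorem for Newton polygons applied to the second relative crystalline cohomology of $\mathcal X/S$ — equivalently, because the height of the formal Brauer group $\widehat{Br}_{\mathcal X/S}$ can only jump up under specialisation — the locus $S'$ of points where the height equals that of the generic fibre is open and dense. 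Choose a closed point $s_0\in S^{\circ}\cap S'$ and set $Y=\mathcal X_{s_0}\otimes_{\kappa(s_0)}\bar{\F}_p$. Then $Y$ is a K3 surface over an algebraic closure of a finite field of characteristic $p$; it carries the automorphism induced by $\alpha_{s_0}$, for which $\rho$ has order $N>2$; it satisfies $\rho(Y)\ge 22-\phi(N)$; and it is supersingular exactly when $X$ is, and otherwise has the same height.

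Now Proposition~\ref{prop2} applies to $Y$: if $p^{m}\equiv-1\pmod N$ for some $m$ then $Y$, and hence $X$, is supersingular; and if $p^{m}\not\equiv-1\pmod N$ for all $m$ while $n$ is the order of $p$ in $(\Z/N\Z)^{*}$, then $Y$, and hence $X$, has height $n$. The delicate point is precisely the selection of $s_0$: a naive specialisation gives only that the height can \emph{increase}, which is the wrong inequality in the supersingular case, so one really must work inside the dense open stratum of constant Newton polygon; the remaining spreading-out manipulations (extending $\alpha$, spreading the $NS$-classes, shrinking $S$ so all fibres are smooth K3's, passing to a finite cover to trivialise Galois actions) are routine. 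Incidentally, one can see directly over $k$ — using Theorem~\ref{thm1} together with the fact, via Proposition~\ref{prop} and \cite{Ke1}, that the characteristic polynomial of $\chi_l(\alpha)$ is the $N$-th cyclotomic polynomial — that a finite-height $X$ must have height $\le n$; it is the reverse inequality, and the supersingular alternative, that force the passage to a finite field.
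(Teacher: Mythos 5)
Your proof is correct and takes essentially the same route as the paper: spread $X$, $\alpha$ and the N\'eron--Severi classes out over a finite-type integral base over $\F_p$, choose a closed point at which the Frobenius invariant (height) agrees with that of the generic fibre and $\rho(\alpha)$ still has order $N$, and apply Proposition~\ref{prop2} to the resulting K3 surface over $\overline{\F_p}$. Your write-up merely makes explicit the two points the paper compresses into ``after shrinking the base every geometric fibre has the same Frobenius invariant,'' namely the Newton-polygon semicontinuity giving the open dense constant-height locus and the constancy of the order of the action on $\pi_*\Omega^2_{\mathcal{X}/S}$.
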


     \begin{proof}
     There exists an integral model $\mathcal{X}/R$ of $X/k$, where $R$ is a Noetherian domain of finite type over $\F _{p}$ equipped
     with an embedding $R \hookrightarrow k$ such that
     a geometric generic fiber $k \otimes _{R} \mathcal{X}$ is isomorphic to $X/k$.
     After shrinking the base $\Spec R$, we may assume $NS(X)$ and $\alpha$ extends to
     $\mathcal{X}/R$.
     But the locus of degeneration of the Frobenius invariant is closed, \cite{A}, section 8,
           so we may assume every geometric fiber of $\mathcal{X}/R$ has the same Frobenius invariant as the generic fiber. We choose a closed fiber $X_{0}$ of $\mathcal{X}/R$.
      $X_{0}$ is a K3 surface defined over a finite field. By the assumption,
      the rank of the Neron-Severi group of $X_{0} \otimes \overline{\F} _{p}$ is at least $22-\phi (N)$.
      Then the claim follows by Proposition \ref{prop2}.
      \end{proof}

     \begin{cor}
     Let $k$ be an algebraically closed field of odd characteristic $p$.
     Assume $X$ is a K3 surface over $k$ and $\alpha$ is an automorphism of $X$ such that the order of $\rho _{X}(\alpha)$ is $N(>2)$.
     We assume that $\alpha$ is of finite order prime to $p$ and that a primitive $N$-th root of unity appears only once in the eigenvalues of $\alpha ^{*}|H^{2}_{\acute{e}t}(X, \Q_{l})$.
     If the order of $p$ in $(\Z /N\Z)^{*}$ is $2n$ and $p^{n} \equiv -1 $ modulo $N$, $X$ is supersingular of Artin-invariant $n$.
     \end{cor}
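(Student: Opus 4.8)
The plan is to reduce to the case $k=\bar{\F}_{q}$ (algebraic closure of a finite field) by the deformation argument of Corollary \ref{cor2}, then force $X$ to be supersingular by combining the proof of Corollary \ref{cor} with Proposition \ref{prop2}, and finally pin down the Artin invariant with Proposition \ref{Ny} and Theorem \ref{thm1}.

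First I would spread $X$ and $\alpha$ out over a Noetherian domain $R$ of finite type over $\F_{p}$, getting $\mathcal{X}/R$ with geometric generic fiber $X/k$, and shrink $\Spec R$ so that $\alpha$ extends and every geometric fiber has the same Frobenius invariant as the generic one (as in Corollary \ref{cor2}). Both hypotheses descend to every fiber: the characteristic polynomial of $\alpha^{*}$ on $H^{2}_{\acute{e}t}(-,\Q_{l})$ is a monic polynomial over $\Q$, hence locally constant on $\Spec R$, so ``a primitive $N$-th root of unity occurs exactly once among its roots'' is a property of this fixed polynomial; and $\alpha$ acts on the invertible sheaf $f_{*}\Omega^{2}_{\mathcal{X}/R}$ by a prime-to-$p$ root of unity $u\in R^{*}$, whose reduction at every point has the same order, so the order of $\rho(\alpha)$ is $N$ on every geometric fiber. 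A closed point of $\Spec R$ then gives a K3 surface over a finite field satisfying all hypotheses, so we may assume $k=\bar{\F}_{q}$.

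Next I would show $X$ is supersingular, arguing by contradiction: suppose $X$ has finite height $h$. Since $\alpha$ has finite order prime to $p$, $\chi_{l}(\alpha)$ is semisimple with roots-of-unity eigenvalues, and the argument in the proof of Corollary \ref{cor} applies: Theorem \ref{thm1}(1) gives that the $h$ eigenvalues of $\alpha^{*}$ on the slope $1-1/h$ part $H^{2}(X,W\mathcal{O}_{X})$ are primitive $N$-th roots of unity, and then the semisimplicity and integrality of the characteristic polynomials of $\alpha^{*}$ and of the geometric Frobenius, together with the Tate conjecture for K3 surfaces, force every eigenvalue of $\chi_{l}(\alpha)$ on $T_{l}(X)$ to be a primitive $N$-th root of unity; hence its characteristic polynomial is a power $\Phi_{N}(T)^{e}$. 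Since $\xi=\rho(\alpha)$ is an eigenvalue of $\chi_{l}(\alpha)$ we have $e\ge 1$, and the hypothesis that each primitive $N$-th root of unity occurs only once among the eigenvalues of $\alpha^{*}|H^{2}_{\acute{e}t}(X,\Q_{l})$ forces $e=1$; hence $\rank NS(X)=22-\phi(N)$. Now Proposition \ref{prop2} applies ($N>2$ because $p$ has even order $2n\ge 2$ in $(\Z/N\Z)^{*}$, $\rank NS(X)\ge 22-\phi(N)$, and $p^{n}\equiv -1\pmod N$), and it gives that $X$ is supersingular — contradicting finite height. So $X$ is supersingular; let $\sigma$ be its Artin invariant.

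Finally I would show $\sigma=n$. By Proposition \ref{Ny}, $N\mid p^{\sigma}+1$, i.e. $p^{\sigma}\equiv -1\pmod N$; since $p$ has order $2n$ in $(\Z/N\Z)^{*}$ and $p^{n}\equiv -1$, the positive $m$ with $p^{m}\equiv -1\pmod N$ are exactly those $\equiv n\pmod{2n}$, so $\sigma\ge n$. Conversely, by Theorem \ref{thm1}(2) the $2\sigma$ numbers $\xi^{\pm p^{0}},\xi^{\pm p^{-1}},\dots,\xi^{\pm p^{1-\sigma}}$ occur among the eigenvalues of $\alpha^{*}$ on $H^{2}_{\acute{e}t}(X,\Q_{l})$; each is a primitive $N$-th root of unity, so by the ``occurs only once'' hypothesis they are pairwise distinct. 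But $\xi^{-p^{-i}}=\xi^{p^{n-i}}$ (as $p^{n}\equiv -1\pmod N$), so all $2\sigma$ of them lie in the orbit $\{\xi^{p^{j}}:0\le j\le 2n-1\}$ of $\xi$ under multiplying the exponent by $p$, which has exactly $2n$ elements; hence $2\sigma\le 2n$, i.e. $\sigma\le n$, so $\sigma=n$. The step I expect to be the main obstacle is the invocation of the proof of Corollary \ref{cor} — that the characteristic polynomial of $\chi_{l}(\alpha)$ on $T_{l}(X)$ is a power of $\Phi_{N}(T)$ — which rests on the Tate conjecture and on the commutation of $\alpha^{*}$ with the geometric Frobenius, and where one must be careful that the relevant characteristic polynomials are $\Q$-rational so that ``Galois conjugate'' makes sense for Frobenius eigenvalues; everything else is formal given Propositions \ref{Ny} and \ref{prop2} and Theorem \ref{thm1}.
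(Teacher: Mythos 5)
Your proposal is correct and takes essentially the same route as the paper: supersingularity via the spreading-out argument and Proposition \ref{prop2} (i.e.\ Corollary \ref{cor2}), then $\sigma \geq n$ from Proposition \ref{Ny} and $\sigma \leq n$ from Theorem \ref{thm1}(2) combined with the multiplicity-one hypothesis. The only difference is that you spell out why $\rank NS(X) \geq 22-\phi(N)$ holds in the finite-height case (via the Frobenius--Galois argument of Corollary \ref{cor}), a point the paper leaves implicit when it simply cites Corollary \ref{cor2}.
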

     \begin{proof}
     By the proof of Corollary \ref{cor2}, $X$ is supersingular. Since $n$ is the least number satisfying $p^{n} \equiv -1$ modulo $N$, the Artin-invariant of $X$ is at least $n$ by Proposition \ref{Ny}. On the other hand, by Theorem \ref{thm1}, the Artin-invariant of $X$ can not be bigger than $n$, so it is equal to $n$.
     \end{proof}

     Because a supersingular K3 surface of Artin-invariant 1 is unique up to isomorphism, we obtain the following.
     \begin{cor}
     Assume $k$ is an algebraically closed field of odd characteristic $p$.
     If $10< \phi (N) < 22$, $N \neq 60$ and $p \equiv -1$ modulo $N$, there exists a unique K3 surface over $k$ up to isomorphism which has a purely non-symplectic automorphism of order $N$.
          \end{cor}
     \begin{proof}
     The existence can be checked in Section 3 of \cite{Ke1}.
     \end{proof}

     \begin{rem}
     Over $\C$, a K3 surface equipped with a purely non-symplectic automorphism of some high order is unique, \cite{MO}, \cite{OZ}, \cite{AST}, \cite{Ta}.
     Also there is a unique K3 surface with an automorphism of order 60 in characteristic$\neq 2$ and there is a unique K3 surface with
     an automorphism of order 66 in characteristic$\neq 2,3$, \cite{Ke2}, \cite{Ke3}.
     \end{rem}

       Assume $X$ is a complex algebraic K3 surface such that the order of $\im \chi _{X}$ is $N(>2)$ and
        the rank of the transcendental lattice of $X$ is $\phi (N)$. By \cite{Ri} Corollary 3.9.4, $X$ corresponds to a CM point in a moduli Shimura variety and is defined over a number field.
          We assume $X$, $NS(X)$ and $\Aut (X)$ are defined over a number field $F$ and
        we fix a smooth projective integral model $X_{R}$ of $X$ over a ring $R$, where
        $\Spec R$ is an affine open set of the affine scheme of the ring of integers of $F$,
        $\Spec \mathfrak{o}_{F}$. For each place $\upsilon \in \Spec R$, let $p_{\upsilon}$ be the residue characteristic of $\upsilon$. We may assume $p _{\upsilon} \nmid Nd(NS(X))$ and $p _{\upsilon}$ is unramified in $F$ for any $\upsilon \in \Spec R$.
        We denote the reduction of $X_{R}$
        over an algebraic closure of the residue field $k (\upsilon)$ by $X_{\upsilon}$.

        \begin{thm}
        If $p_{\upsilon} ^{m} \not\equiv -1$ modulo $N$ for all $m \in \Z$, $X_{\upsilon}$ is of finite height and the height of
        $X_{\upsilon}$ is the order of $p_{\upsilon}$ in $(\Z/N)^{*}$.
        If the order of $p_{\upsilon}$ in $(\Z/N)^{*}$ is $2m$ and $p_{\upsilon}^{m} \equiv -1$ modulo $N$,
        $X _{\upsilon}$ is supersingular of Artin-invariant $m$.
         \end{thm}
        \begin{proof}
        There is an embedding
        $$NS(X) \hookrightarrow NS(X _{\upsilon}),$$
        so the rank of $NS(X_{\upsilon})$ is at least $22- \phi (N)$.
        By Corollary \ref{cor2}, $p _{\upsilon}^{m} \not\equiv -1$ modulo $N$ for any $m \in Z$ if and only if $X_{\upsilon}$ is of finite height and in this case,
        the height is equal to the order of $p _{\upsilon}$ in $(\Z/n\Z)^{*}$.\\

        Now assume $X _{\upsilon }$ is supersingular and $2m$ is the order of $p_{\upsilon}$ in $(\Z/ N)^{*}$.
        We fix an automorphism $\alpha \in \Aut(X)$ such that $\xi = \rho _{X}(\alpha)$ is a primitive $N$-th root of unity.
        Note that we do not assume $\alpha$ is of finite order.
        Let $T_{NS}(X)$ be the orthogonal complement of the embedding
        $$NS(X) \otimes W \hookrightarrow NS(X _{\upsilon}) \otimes W.$$
        Here $W$ is the ring of Witt vectors of the algebraic closure of $k(\upsilon)$.
        Because $NS(X_{\upsilon}) \otimes K$
        is canonically isomorphic to $H^{2}_{dr}(X_{R}/R) \otimes K$,
         $\alpha ^{*}| T_{NS}(X)$ is of finite order
        and every $N$-th root of unity appears once as an eigenvalue of $\alpha ^{*}| T_{NS}(X)$.
                Since $p$ does not divide $d(NS(X))$,
        $NS(X) \otimes W$ is unimodular.
        Because there is a unimodular sublattice of $NS(X_{\upsilon}) \otimes W$ of rank $22 - \phi (N)$ ,
        the Artin-invariant of $X_{\upsilon}$ is at most $\phi (N)/2$.
        If $\sigma$ is the Artin-invariant of $X_{\upsilon}$, $N$ divides $p^{\sigma} +1$, so $p ^{\sigma} \equiv -1$ modulo $N$
        and $\sigma$ is an odd multiple of $n$.
        We have an inclusion
                 $$NS(X _{\upsilon}) ^{*}/NS(X_{\upsilon}) \simeq T_{NS}(X) ^{*}/T_{NS}(X) \subseteq T_{NS}(X)/pT_{NS}(X)$$
                 which is compatible with the actions of $\Aut(X)$.
        All the eigenvalues of $\alpha ^{*}| (T_{NS}(X)/pT_{NS}(X))$ are distinct.
        But
        if $\sigma$ is greater than $n$, $\rho _{X_{\upsilon}}(\alpha)$ appears more than once in the eigenvalues of $\alpha ^{*}|(NS(X_{\upsilon})^{*}
        /NS(X_{\upsilon}))$ by Theorem 3.9. This contradicts the assumption.
        Therefore the Artin-invariant of $X_{\upsilon}$ is $n$.
                \end{proof}

     \begin{ex}(c.f. \cite{Shi}, \cite{Yu})
     Assume $X$ is a K3 surface defined over a number field $F$ such that the order of $\im \rho _{X}$ is 36.
     The rank of the transcendental lattice of $X$, $T(X)$ is $12 = \phi (36)$.
     For example, an elliptic K3 surface $X_{36} / \Q$ defined by the equation
     $$ y^{2} = x^{3} + t^{5}(t^{6}-1)$$
     has a purely non-symplectic automorphism of order 36,
     $(t,x,y) \mapsto (\xi ^{30}t, \xi ^{2}x, \xi ^{3} y)$ where $\xi$ is a primitive 36th root of unity.
     Although it is quite believable,
     we do not know whether $X_{36}$ is a unique complex K3 surface satisfying this condition.
     For almost all places $\upsilon$ of $F$, $X$ has a good reduction $X_{\upsilon}$.
     The Frobenius invariant of $X_{\upsilon}$ is following.\\

     \begin{center}
     \begin{tabular}{|c|c|}
     \hline
     congruence class of $p_{\upsilon}$  modulo 36 & Frobenius invariant of $X _{\upsilon}$\\
     \hline
     1 & ordinary \\
     \hline
     17 & height 2 \\
     \hline
     13, 25 & height 3\\
     \hline
     5, 7, 19, 29, 31 & height 6\\
     \hline
     35 & supersingular of Artin-invariant 1\\
     \hline
     11, 23 & supersingular of Artin invariant 3 \\
     \hline
     \end{tabular}
     \end{center}

     \end{ex}

\vskip 1cm

\noindent
J.Jang\\
Department of Mathematics\\
University of Ulsan \\
Daehakro 93, Namgu Ulsan 680-749, Korea\\
Fax : 82-52-259-1692 \\ \\
jmjang@ulsan.ac.kr

     \end{document}